\numberwithin{equation}{section}
\DeclareMathOperator{\E}{\mathbb{E}}
\DeclareMathOperator*{\Span}{span}
\DeclareMathOperator*{\im}{Im}
\DeclareMathOperator*{\re}{Re}
\DeclareMathOperator*{\tr}{tr}
\DeclareMathOperator*{\rank}{rank}
\renewcommand{\Pr}[2][]{\mathbb{P}_{#1} \left\{ #2 \rule{0mm}{3mm}\right\}}
\newcommand{\Prr}[2][]{\mathbb{P}_{#1} \left( #2 \rule{0mm}{3mm}\right)}
\def \C {\mathbb{C}}
\def \N {\mathbb{N}}
\def \P {\mathbb{P}}
\def \R {\mathbb{R}}
\def \EE {\mathcal{E}}
\def \BB {\mathcal{B}}
\def \LL {\mathcal{L}}
\def \VV {\mathcal{V}}
\def \a {\alpha}
\def \b {\beta}
\def \g {\gamma}
\def \e {\varepsilon}
\def \d {\delta}
\def \s {\sigma}
\def \HS {\mathrm{HS}}
\def \tran {\mathsf{T}}
\def \HS {\mathrm{HS}}
\def \< {\langle}
\def \> {\rangle}
\def \const {\mathrm{const}}
\newcommand{\norm}[1]{\left \| #1 \right \|}
\newtheorem{theorem}{Theorem}[section]
\newtheorem{proposition}[theorem]{Proposition}
\newtheorem{corollary}[theorem]{Corollary}
\newtheorem{lemma}[theorem]{Lemma}
\newtheorem{assumption}[theorem]{Assumption}
\theoremstyle{remark}
\newtheorem{remark}[theorem]{Remark}
\title[Delocalization of eigenvectors of random matrices]{Delocalization of eigenvectors of random matrices with independent entries}
\author{Mark Rudelson}
\author{Roman Vershynin}
\address{Department of Mathematics, University of Michigan, 530 Church St., Ann Arbor, MI 48109, U.S.A.}
\email{\{rudelson, romanv\}@umich.edu}
\thanks{Partially supported by NSF grants DMS 1161372, 1001829, 1265782 
and U.S. Air Force grant FA9550-14-1-0009}
\date{\today}
\subjclass[2000]{60B20}
\begin{document}

\begin{abstract}
  We prove that an $n \times n$ random matrix $G$ with independent entries
  is completely delocalized.
  Suppose the entries of $G$ have zero means, variances uniformly bounded below,
  and a uniform tail decay of exponential type.
  Then with high probability all unit eigenvectors of $G$ have all coordinates
  of magnitude $O(n^{-1/2})$, modulo logarithmic corrections.
  This comes a consequence of a new, geometric, approach to delocalization for random matrices.
\end{abstract}

\maketitle

\setcounter{tocdepth}{1}

\section{Introduction}

This paper establishes a complete delocalization of random matrices with independent entries haying variances of the same order of magnitude.
For an $n \times n$ matrix $G$, complete delocalization refers to the situation where
all unit eigenvectors $v$ of $G$ have all coordinates of the smallest possible magnitude
$n^{-1/2}$, up to logarithmic corrections.
For example, a random matrix $G$ with independent standard normal entries
is completely delocalized with high probability.
Indeed, by rotation invariance the unit eigenvectors $v$ are uniformly
distributed on the sphere $S^{n-1}$, so with high probability one has
$\|v\|_\infty = \max_{i \le n} |v_i| = O(\sqrt{\log(n)/n})$ for all $v$.

Rotation-invariant ensembles seem to be the only example where
delocalization can be obtained easily. Only recently was it proved
by L.~Erd\"os {\em et al.} that general symmetric and Hermitian
random matrices $H$ with independent entries are completely delocalized
\cite{ESY 1, ESY 2, ESY 3}.
These results were later extended by  L.~Erd\"os {\em et al.}  \cite{EYY} and by Tao and Vu \cite{TV eigenvectors}, see also surveys \cite{EY, E}.
Very recently, the optimal bound $O(\sqrt{\log n/n})$ was obtained by Vu and Wang \cite{VW} for the ``bulk'' eigenvectors of Hermitian matrices.
Delocalization properties with varying degrees of strength and generality were
then established for several other symmetric and Hermitian ensembles --
band matrices \cite{EN 1, EN 2, ENYY band},
sparse matrices (adjacency matrices of Erd\"os-Renyi graphs) \cite{TVW, ENYY I, ENYY II},
heavy-tailed matrices \cite{BG, BP}, and sample covariance matrices \cite{CMS}.

Despite this recent progress, no delocalization results were available
for {\em non-Hermitian random matrices} prior to the present work.
Similar to the Hermitian case, non-Hermitian random matrices
have been successful in describing various physical phenomena, see \cite{FN, FS, KA, Fisch, TWG}
and the references therein.
The distribution of eigenvectors of non-Hermitian random matrices has been studied in physics
literature, mostly focusing on correlations of certain eigenvector entries, see \cite{MC, Janik}.

\medskip

All previous approaches to delocalization in random matrices were spectral. Delocalization was obtained as a
byproduct of local limit laws, which determine eigenvalue distribution on microscopic scales.
For example, delocalization for symmetric random matrices was deduced
from a local version of Wigner's semicircle law
which controls the number of eigenvalues of $H$ falling in short intervals, even down to intervals
where the average number of eigenvalues is logarithmic in the dimension \cite{ESY 1, ESY 2, ESY 3, EYY}.

In this paper we develop a new approach to delocalization of random matrices, which is geometric
rather than spectral. The only spectral properties we rely on are crude bounds on the extreme singular values
of random matrices. As a result, the new approach can work smoothly in situations where limit spectral
laws are unknown or even impossible. In particular, one does not need to require that
the variances of all entries be the same, or even that the matrix of variances be
doubly-stochastic (as e.g. \cite{EYY}).

The main result can be stated for random variables $\xi$ with tail decay of exponential type, thus
satisfying $\Pr{|\xi| \ge t} \le 2\exp(-c t^\a)$ for some $c,\a>0$ and all $t>0$.
One can express this equivalently by the growth of moments
$\E|\xi|^p = O(p)^{p/\alpha}$ as $p \to \infty$, which is quantitatively captured by the norm
$$
\|\xi\|_{\psi_{\a}} := \sup_{p \ge 1} p^{-1/\a} (\E|\xi|^p)^{1/p} < \infty.
$$
The case $\a=2$ corresponds to {\em sub-gaussian} random variables\footnote{Standard
  properties of sub-gaussian random variables can be found in \cite[Section~5.2.3]{V RMT}.}.
It is convenient to state and prove the main result for sub-gaussian random variables,
and then deduce a similar result for general $\a>0$ using a standard truncation argument.

\begin{theorem}[Delocalization, subgaussian]				\label{thm: main}
  Let $G$ be an $n \times n$ real random matrix whose entries $G_{ij}$  are independent
  random variables satisfying $\E G_{ij} = 0$, $\E G_{ij}^2 \ge 1$ and $\|G_{ij}\|_{\psi_2} \le K$.
  Let $t \ge 2$. Then, with probability at least $1-n^{1-t}$, the matrix $G$ is completely delocalized,
  meaning that all eigenvectors $v$ of $G$ satisfy
  $$
  \|v\|_\infty \le \frac{C t^{3/2} \log^{9/2} n}{\sqrt{n}} \, \|v\|_2.
  $$
  Here $C$ depends only on $K$.
\end{theorem}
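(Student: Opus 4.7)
My approach rests on a block/Schur identity for the eigenvector equation. I would fix a coordinate $i=1$ (by relabeling) and write
\[
G = \begin{pmatrix} G_{11} & Y^\tran \\ X & A \end{pmatrix}, \qquad v = \begin{pmatrix} v_1 \\ w \end{pmatrix},
\]
with $A \in \R^{(n-1)\times(n-1)}$ and $X, Y, w \in \R^{n-1}$, noting that $X$ is independent of $A$ and $Y$. The last $n-1$ scalar equations of $Gv = \lambda v$ give $(A-\lambda I)w = -v_1 X$; whenever $A-\lambda I$ is invertible this yields the identity
\[
v_1^2 \bigl(1 + \|(A-\lambda I)^{-1} X\|_2^2\bigr) = 1.
\]
The proof then reduces to establishing $\|(A-\lambda I)^{-1} X\|_2^2 \gtrsim n/\mathrm{polylog}(n)$ with high probability, uniformly over all eigenvalues $\lambda$ of $G$; a union bound over the $n$ coordinates finishes the theorem.

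\textbf{Net in the spectral parameter.} With probability $1 - e^{-cn}$, $\|G\| \leq C\sqrt{n}$, confining every eigenvalue to the disk $D$ of radius $C\sqrt{n}$. I would choose an $n^{-C_0}$-net $\NN \subset D$ with $|\NN| \leq n^{O(C_0)}$. A Rudelson--Vershynin crude invertibility estimate, applied uniformly over $\NN$, delivers $s_{\min}(A-\lambda_0 I) \geq n^{-C_0'}$ for every $\lambda_0 \in \NN$ off a polynomially small event. The resolvent identity then transfers the target lower bound from each $\lambda_0$ to any nearby $\lambda$.

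\textbf{Core estimate at a net point.} Fix $\lambda_0 \in \NN$ and condition on $A$. Using the singular value decomposition $A - \lambda_0 I = U S V^\tran$ with $s_1 \leq \cdots \leq s_{n-1}$ and left singular vectors $u_j$,
\[
\|(A-\lambda_0 I)^{-1} X\|_2^2 \;=\; \sum_{j=1}^{n-1} \frac{|\langle u_j, X\rangle|^2}{s_j^2} \;\geq\; \frac{\|P_K X\|_2^2}{s_K(A-\lambda_0 I)^2},
\]
where $K := C_1 t \log n$ and $P_K$ projects onto $\Span(u_1,\dots,u_K)$. Two ingredients then suffice. (a) Conditional on $A$, sub-gaussian concentration of the quadratic form $X^\tran P_K X$ about its mean (at least $K$) gives $\|P_K X\|_2^2 \geq K/2$ with failure probability $\exp(-cK)$. (b) One establishes $s_K(A - \lambda_0 I) \leq C_2 K/\sqrt{n}$ uniformly over $\NN$, off an event much smaller than $|\NN|^{-1}$. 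Combining (a) and (b) yields $\|(A-\lambda_0 I)^{-1} X\|_2^2 \gtrsim n/K$, hence $|v_1|^2 \lesssim K/n$ along the net.

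\textbf{Main obstacle.} The decisive step will be (b): a quantitative uniform upper bound on the $K$-th smallest singular value of the random non-Hermitian matrix $A - \lambda_0 I$, i.e.\ showing that many singular values, not just $s_{\min}$, are atypically small. I would prove it geometrically by constructing an explicit $K$-dimensional near-null subspace for $A - \lambda_0 I$ --- for instance, take the span of $K$ standard basis vectors and project off the image of a complementary block of $A - \lambda_0 I$, bounding the amount of contraction via a Rudelson--Vershynin-type small-ball inequality on the random rows of $A$. The $\log^{9/2} n$ factor in the final bound will accumulate the logarithmic losses from the net in $\lambda$, the rank $K = C_1 t \log n$, the resolvent transfer off the net, and the probabilistic parameters in (b); the union bound over the $n$ coordinates then produces the $n^{1-t}$ probability.
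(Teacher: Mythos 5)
Your Schur-complement identity $v_1^2\bigl(1 + \|(A-\lambda I)^{-1}X\|_2^2\bigr) = 1$, the net in $\lambda$, the conditional concentration of $\|P_K X\|_2$ in step (a), and the union bound over coordinates are all correct. But this is the standard resolvent route from the Hermitian theory (Erd\H{o}s--Schlein--Yau, Tao--Vu), and its decisive input --- your step (b), that $s_K(A-\lambda_0 I)\le C_2 K/\sqrt n$ uniformly over $\lambda_0$ in a polynomial-size net, with failure probability far below $|\mathcal N|^{-1}$ --- is an upper bound on intermediate singular values near the hard edge, i.e.\ a local law for the Hermitization of $A-\lambda_0 I$. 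This is precisely the spectral information the paper's introduction says was unavailable for non-Hermitian ensembles, and that the geometric method is built to avoid. Indeed, the paper never bounds any small singular value from above; it only needs the \emph{ratios} $s_{(1-\delta)l'}/s_{(1+\delta)l'}$ of singular values of $\bar A^{-1}$ to be controlled inside an adaptively chosen window $l'\in[l/2,l]$, and obtains that for free by pigeonhole (Lemma~\ref{lem: ratios}) from the crude two-sided bounds $\|\bar A\|\lesssim\sqrt n$ and $s_{n-O(l)}(\bar A)\gtrsim 1/\sqrt n$. Your construction is tied to the very bottom of the spectrum and has no such freedom to move the window, which is why it runs into exactly the missing local law.

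Your proposed geometric proof of (b) also does not go through as written. Projecting $(A-\lambda_0 I)v$ onto the orthocomplement of the image of a complementary column block only \emph{decreases} the norm, so combining it with a Rudelson--Vershynin small-ball \emph{lower} bound certifies that certain singular values are \emph{not} small --- the reverse of what (b) requires. The natural way to produce a $K$-dimensional near-null candidate is to take $V=\ker(R)$ where $R$ consists of the first $n-1-K$ rows of $A-\lambda_0 I$, and to bound $\sup_{v\in V,\,\|v\|_2=1}\|R'v\|_2$ for the remaining $K$ rows $R'$ by concentration together with a net over $V$. But already the diagonal contribution $|\lambda_0|\bigl(\sum_{i}|v_{n-1-K+i}|^2\bigr)^{1/2}$ is of order $\sqrt n\cdot\sqrt{K/n}=\sqrt K$ even for delocalized $v$, and the i.i.d.\ part of $R'$ contributes another $\sqrt K$. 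This yields only $s_K\lesssim\sqrt K$, short of the target $K/\sqrt n$ by a factor of $\sqrt{n/K}$, and I see no elementary way to close that gap: doing so would amount to proving a hard-edge local law for $(A-\lambda_0 I)^*(A-\lambda_0 I)$, a statement of difficulty comparable to the theorem itself. This is the genuine gap in your proposal.
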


\begin{remark}[Complex matrices]				\label{rem: complex}
The same conclusion as in Theorem~\ref{thm: main} holds for a complex matrix $G$.
One just needs to require that both real and imaginary parts of all entries are independent
and satisfy the three conditions in Theorem~\ref{thm: main}. 
\end{remark}

\begin{remark}[Logarithmic losses]   \label{rem: suboptimality} 
The exponent $9/2$ of the logarithm in Theorem~\ref{thm: main} is suboptimal,
and there are several points in the proof that can be improved.
We believe that by taking care of these points, it is possible to improve the exponent to the optimal value $1/2$.
However, such improvements would come at the expense of simplicity of the argument,
while in this paper we aim at presenting the most transparent proof. The exponents $3/2$ is probably suboptimal as well.
\end{remark}

\begin{remark}[Dependence on sub-gaussian norms $\|G_{ij}\|_{\psi_2}$]
  The proof of Theorem \ref{thm: main} shows that $C$ depends polynomially
  on $K$, i.e., $C \le 2 K^{C_0}$ for some absolute constant $C_0$.
  This observation allows one to extend Theorem~\ref{thm: main} to the situation where the
  entries $G_{ij}$ of $G$ have uniformly bounded $\psi_\a$-norms, for any fixed $\a>0$.
\end{remark}

\begin{corollary}[Delocalization, general exponential tail decay]				\label{cor: psi_alpha}
  Let $G$ be an $n \times n$ real random matrix whose entires $G_{ij}$  are independent
  random variables satisfying $\E G_{ij} = 0$, $\E G_{ij}^2 \ge 1$ and $\|G_{ij}\|_{\psi_\a} \le M$.
  Let $t \ge 2$. Then, with probability at least $1-n^{1-t}$,  all eigenvectors $v$ of $G$ satisfy
  $$
  \|v\|_\infty \le \frac{C t^{\b} \log^{\g} n}{\sqrt{n}} \, \|v\|_2.
  $$
  Here $C, \b, \g$ depend only on $\a>0$ and  $M$.
\end{corollary}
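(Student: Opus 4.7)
The plan is to reduce the general $\psi_\alpha$ setting to the sub-gaussian case of Theorem~\ref{thm: main} by a standard truncation argument, with the loss absorbed entirely into the logarithmic factors.

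Fix a truncation level $R := (C_1 M t \log n)^{1/\alpha}$, with $C_1$ chosen large enough that $\Pr{|G_{ij}|>R}\le n^{-t-2}$, and let $E$ be the event that $|G_{ij}|\le R$ for all $i,j$, so that a union bound gives $\Pr{E^c}\le n^{-t}$. Introduce the truncated and recentered matrix
\[
\tilde G_{ij} := G_{ij}\mathbf{1}_{\{|G_{ij}|\le R\}} - \E\bigl[G_{ij}\mathbf{1}_{\{|G_{ij}|\le R\}}\bigr].
\]
Its entries are independent, have mean zero, are a.s.\ bounded by $2R$ (hence have $\psi_2$-norm $\le CR$), and their variances differ from $\E G_{ij}^2\ge 1$ by a quantity of size $O(R^2 n^{-C_1 t})$, so a harmless constant rescaling restores $\E\tilde G_{ij}^2\ge 1$. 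Applying Theorem~\ref{thm: main} to $\tilde G$ with $K = CR = C'(t\log n)^{1/\alpha}$, and invoking the remark that the constant there depends polynomially on $K$ (i.e.\ $C\le 2 K^{C_0}$), yields with probability $\ge 1-n^{1-t}$ the bound
\[
\|v\|_\infty \;\le\; \frac{C_2\, t^{3/2+C_0/\alpha}\,\log^{9/2+C_0/\alpha} n}{\sqrt n}\,\|v\|_2
\]
for every eigenvector $v$ of $\tilde G$. This has exactly the form claimed in the corollary, with $\beta = 3/2+C_0/\alpha$ and $\gamma = 9/2+C_0/\alpha$.

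The remaining task is to transfer the bound from eigenvectors of $\tilde G$ to eigenvectors of $G$. On $E$ one has $G = \tilde G + D$, where $D$ is a deterministic matrix with $|D_{ij}| = |\E G_{ij}\mathbf{1}_{\{|G_{ij}|>R\}}|\le n^{-t-1}$, so that $\|D\|\le n^{-t}$ in operator norm; by enlarging $C_1$, the size of $\|D\|$ can be made smaller than any prescribed inverse polynomial in $n$. I expect this perturbation step to be the main obstacle, because for a non-Hermitian matrix the eigenvectors need not be stable under an arbitrary small additive perturbation in general. However, Theorem~\ref{thm: main} itself relies on crude bounds on the singular values of the random matrix, and the same type of polynomial lower bound on the smallest singular value of $\tilde G - z I$ controls the pseudospectrum of $\tilde G$; this lets one approximate each eigenvector of $G$ by an eigenvector of $\tilde G$ with an error of order $\mathrm{poly}(n)\cdot\|D\|$, which is negligible on the delocalization scale and hence completes the transfer.
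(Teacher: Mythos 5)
You correctly identify the obstacle: a non-Hermitian matrix's exact eigenvectors can move wildly under a tiny additive perturbation, so transferring the delocalization bound from $\tilde G$ back to $G$ is not automatic. But your proposed fix does not close the gap. A polynomial lower bound on $s_n(\tilde G - zI)$ controls only the pseudospectrum (hence the eigenvalues), not the eigenvectors; proximity of $v$ to the kernel of $\tilde G - \lambda I$ in the sense $\|(\tilde G - \lambda I)v\|_2 \le \|D\|$ does not imply that $v$ is within $\mathrm{poly}(n)\cdot\|D\|$ of an exact eigenvector of $\tilde G$. That implication would require control of eigenvector condition numbers (equivalently, angles between left and right eigenvectors), which is a genuinely separate and difficult problem for non-Hermitian random matrices and is not provided by the singular-value bounds used in this paper. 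So the last sentence of your argument is asserting exactly what you flagged as the main obstacle.

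The paper avoids this entirely by never matching exact eigenvectors across a perturbation. The truncation argument is run at the level of the localization event $\LL_{W,w}$ (equivalently, Theorem~\ref{thm: main approximate} about \emph{approximate} eigenvectors), which is manifestly stable under operator-norm perturbations: if $v$ is an exact eigenvector of $G$ and $\|G - G'\| \le n^{-1/2}$, then $v$ is a $(1/\sqrt n)$-approximate eigenvector of $G'$, full stop. Concretely, the paper truncates without recentering, observes that $\|\E\tilde G\| \le n^{-1/2}$, and notes that $\LL_{W,w}$ for $A = G - zI$ forces $\LL_{W,w+1}$ for $\tilde A = G - \E\tilde G - zI$; on the high-probability event $\{\tilde G = G\}$, $\tilde A = (\tilde G - \E\tilde G) - zI$ has independent, centered, bounded entries, and Proposition~\ref{prop: localization unlikely} applies. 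Your proof would go through with essentially the same truncation step if you simply cite Theorem~\ref{thm: main approximate} (with $s = O(1)$) in place of Theorem~\ref{thm: main}, and apply it to $\tilde G - \E\tilde G$: each exact eigenvector of $G$ becomes an approximate eigenvector of the truncated, recentered matrix, and no eigenvector-perturbation lemma is needed.
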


Due to its spectral nature, our argument automatically establishes a stronger form of delocalization 
than was stated in Theorem~\ref{thm: main}. Indeed, one can show that not only eigenvectors
but also {\em approximate eigenvectors} of $G$ are delocalized.
Given $\e>0$, we call $v \in \R^n$ an $\e$-approximate eigenvector if there exists $z \in \C$ 
(an $\e$-approximate eigenvalue) such that $\|Gv - z v\|_2 \le \e \|v\|_2$.
We will prove the following extension of Theorem~\ref{thm: main}.

\begin{theorem}[Delocalization of approximate eigenvectors]				\label{thm: main approximate}
  Let $G$ be a random matrix as in Theorem~\ref{thm: main}, 
  and let $t \ge 2$ and $s \ge 0$.
  Then, with probability at least $1-n^{1-t(s+1)}$, 
  all $(s/\sqrt{n})$-approximate eigenvectors $v$ of $G$ satisfy
  \begin{equation}         \label{eq: main approximate}
  \|v\|_\infty \le \frac{C t^{3/2} (s+1)^{3/2} \log^{9/2} n}{\sqrt{n}} \, \|v\|_2.
  \end{equation}
  Here $C$ depends only on $K$.
\end{theorem}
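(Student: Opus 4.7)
The plan is to view Theorem~\ref{thm: main approximate} as the natural quantitative strengthening of Theorem~\ref{thm: main}: the geometric proof of the latter really produces a lower bound on $\|(G-zI)v\|_2$ for unit vectors $v$ with a large coordinate, and replacing the exact constraint $(G-zI)v = 0$ by the soft constraint $\|(G-zI)v\|_2 \le s/\sqrt{n}$ only shifts the scale at which the underlying small-ball estimates are evaluated. My strategy is therefore to carry the slack parameter $s$ through the entire argument and verify that the relevant small-ball probabilities degrade only polynomially in $s+1$.

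First I would reduce to a finite set of eigenvalue candidates. Standard concentration gives $\|G\| \le C\sqrt{n\log n}$ with probability at least $1 - 2e^{-n}$, so every $(s/\sqrt{n})$-approximate eigenvalue lies in a disk $D \subset \C$ of radius $C\sqrt{n\log n}$. Choose a $(1/\sqrt{n})$-net $\NN \subset D$, so that $|\NN| = \mathrm{poly}(n)$. For any $(s/\sqrt{n})$-approximate eigenpair $(v,z)$ there is $z' \in \NN$ with $|z-z'| \le 1/\sqrt{n}$, whence
\[
\|(G-z'I)v\|_2 \le \|(G-zI)v\|_2 + |z-z'|\,\|v\|_2 \le \frac{s+1}{\sqrt{n}}\,\|v\|_2.
\]
It therefore suffices to show, for each fixed $z \in \NN$ and each fixed $i \in [n]$, that with probability at least $1 - n^{-t(s+1) - C_0}$ no unit vector $v$ with $|v_i| \ge \tau/\sqrt{n}$ satisfies $\|(G-zI)v\|_2 \le (s+1)/\sqrt{n}$, where $\tau = C t^{3/2}(s+1)^{3/2}\log^{9/2}n$ and $C_0$ is chosen large enough to absorb the union bound over $i$ and $\NN$.

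For fixed $z$ and $i$, I would follow the proof of Theorem~\ref{thm: main} essentially verbatim. Condition on all columns of $G$ other than the $i$-th, write $v = v_i e_i + v'$ with $v' \perp e_i$, and rewrite the constraint as
\[
\bigl\|\, v_i(G_i - z e_i) + M_i v' \,\bigr\|_2 \le (s+1)/\sqrt{n},
\]
where $G_i$ is the $i$-th column of $G$ and $M_i$ denotes $G - zI$ with its $i$-th column deleted. When $|v_i| \ge \tau/\sqrt{n}$, this forces the independent random vector $G_i$ into a Euclidean ball of radius $O((s+1)/\tau)$ around a point determined by $M_i$ and $v'$, and the probability of this event is controlled by the same LCD-based small-ball machinery that underlies Theorem~\ref{thm: main}.

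The main obstacle will be to track these small-ball estimates \emph{quantitatively in the scale parameter}: in the proof of Theorem~\ref{thm: main} one obtains a bound of the form $n^{-t}$ on a small-ball event at radius of order $1/\sqrt{n}$, and here one must verify that enlarging the radius to $(s+1)/\sqrt{n}$ only worsens the probability by a factor of $(s+1)^{O(1)}$, so that the exponent improves from $n^{-t}$ to $n^{-t(s+1)}$ precisely when $\tau$ is inflated by $(s+1)^{3/2}$. The symmetric roles of $t$ and $s+1$ in~\eqref{eq: main approximate} reflect exactly this: both parameters measure how much room the underlying anti-concentration estimates have, and both enter the final bound through the same $3/2$-power.
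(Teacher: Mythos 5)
Your net argument over approximate eigenvalues (first paragraph) is correct and matches Corollary~\ref{cor: delocalization all z} in the paper. However, the core of your proposal contains a genuine gap, and also misattributes the paper's method.

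The gap is in the "for fixed $z$ and $i$" step. You propose to condition on all columns but the $i$-th, write $v = v_i e_i + v'$, and treat the constraint $\|v_i(G_i - ze_i) + M_i v'\|_2 \le (s+1)/\sqrt{n}$ as a small-ball event for $G_i$. Unwinding this, the event is that $d(G_i - ze_i,\, \im M_i) \le (s+1)/(|v_i|\sqrt{n})$. Since $\im M_i$ is $(n-1)$-dimensional, this distance is $|\langle G_i, u\rangle - z u_i|$, where $u$ is the unit normal to $\im M_i$, determined by $M_i$ and $z$. The term $\langle G_i, u\rangle$ is well-behaved, but the deterministic shift $z u_i$ is not: one has no a priori control on the diagonal coordinate $u_i$. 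Estimating $u_i$ is exactly as hard as the original delocalization problem -- indeed $u_i$ is essentially the $i$-th coordinate of a (left) eigenvector-like object. So the reduction to a small-ball bound on $G_i$ does not close; it just relocates the difficulty. This is what the paper calls ``the challenging part of our argument is for $z$ not close to zero,'' and is the reason a rank-one projection (which is what your conditioning on a single column amounts to) is not enough. The paper's resolution is to build a test projection $P$ of rank $l' \sim l \sim t(s+1)\log^2 n$ annihilating all columns $A_j$ with $j \notin \{j_0\}\cup J_0$, and then \emph{compare} the $l$ quantities $\|Pe_j\|_2$ against one another (Theorem~\ref{thm: test projection}, Proposition~\ref{prop: Pei via distances}) rather than try to estimate any one of them. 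The comparison is achieved by the distance estimates of Theorem~\ref{thm: distance}, which exploit the fact that each $\|Pe_j\|_2$ is $1/d(r_j,\Span(r_i)_{i\ne j})$ with the $r_j$'s identically distributed given $\bar A$. None of this structure appears in, or can be recovered from, a rank-one conditioning.

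The misattribution: the paper does not use ``LCD-based small-ball machinery'' anywhere. The anti-concentration inputs are Theorem~\ref{thm: concentration} (Hanson--Wright-type sub-gaussian concentration and its small-ball corollary) together with crude singular value bounds in Appendix~\ref{app}; arithmetic structure of the coordinates plays no role. Relatedly, your claim that ``the same small-ball machinery underlies Theorem~\ref{thm: main}'' inverts the logical order of the paper: Theorem~\ref{thm: main} is proved \emph{as a corollary} of Theorem~\ref{thm: main approximate} (set $s=0$), not the other way around, so there is no separate proof of the exact case to perturb. Finally, a small error: for sub-gaussian entries the operator norm bound is $\|G\| \le C\sqrt{n}$ with probability $1 - 2e^{-cn}$ (Remark~\ref{rem: product norm special cases}), not $C\sqrt{n\log n}$.
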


\begin{remark}[Further extensions]
  The results in of this paper could be extended in several other ways. For instance,
  it is possible to drop the assumption that \emph{all} variances of the entries are of the same order and prove a similar theorem for sparse random matrices.
   One can establish the 
  \emph{isotropic delocalization} in the sense of \cite{BEKYY, KY}. We did not pursue these directions 
  since it would have made the presentation heavier. 
  It is also possible that a version of Theorems~\ref{thm: main} and \ref{thm: main approximate} 
  can be proved for {\em Hermitian matrices}. 
  We leave this direction for the future. 
\end{remark}

\subsection{Outline of the argument}

Our approach to Theorem~\ref{thm: main approximate} is based on a dimension reduction argument.
If the matrix $G$ has a localized approximate eigenvector, it will be detected from an imbalance of a suitable
projection of $G$. As we shall see, this argument yields a lower bound on $\|(G-zI)v\|_2$ that is uniform over
all unit vectors $v$ such that $\|v\|_\infty \gg 1/\sqrt{n}$. 

Let us describe this strategy in loose terms.
We fix $z$ and consider the random matrix $A = G-zI$; denote its columns by $A_j$.
Consider a projection $P$ whose kernel contains all $A_j$ except
for $j \in \{j_0\} \cup J_0$, where $j_0 \in [n]$ and $J \subset [n] \setminus \{j_0\}$
are a random uniform index and subset respectively,
with $|J_0| = l \sim \log^{10} n$.
We call such $P$ a {\em test projection}.
By its definition, triangle inequality and Cachy-Schwarz inequality, we have
for any vector $v$ that
\begin{align*}
\|Av\|_2
  &\ge \|PAv\|_2	
  = \Big\| \sum_{j=1}^n v_j PA_j \Big\|_2
  = \Big\| v_{j_0} PA_{j_0} + \sum_{j \in J_0} v_j PA_j \Big\|_2 \nonumber\\
  &\ge |v_{j_0}| \|PA_{j_0}\|_2
    - \Big( \sum_{j \in J_0} |v_j|^2 \Big)^{1/2} \Big( \sum_{j \in J_0} \|PA_j\|_2^2 \Big)^{1/2}.
\end{align*}

Suppose that $v$ is localized, say $\|v\|_\infty > l^2/\sqrt{n}$. Using the
randomness of $j_0$ and $J$, with non-negligible probability (around $1/n$)
we have $|v_{j_0}| = \|v\|_\infty > l^2/\sqrt{n}$ and
$( \sum_{j \in J_0} |v_j|^2 )^{1/2} \lesssim \sqrt{l/n}$.
On this event, we have shown that
\begin{equation}				\label{eq: Av lower intro}
\|Av\|_2 \gtrsim \frac{l^2}{\sqrt{n}} \, \|PA_{j_0}\|_2 - \frac{l}{\sqrt{n}} \, \max_{j \in J_0} \|PA_j\|_2.
\end{equation}
Since the right hand side of \eqref{eq: Av lower intro} does not depend on $v$, 
we obtained a uniform lower estimate for all localized vectors $v$.

It remains to estimate the magnitudes of $\|PA_j\|_2$ for $j \in \{j_0\} \cup J_0$.
What helps us is that the test projection $P$ can be made independent of the random
vectors $A_j$ appearing in \eqref{eq: Av lower intro}. 
Since $A=G-zI$, we can represent $A_j = G_j - z e_j$ where $e_j$ denote the
standard basis vectors. Assume first that $z$ is very close to zero, so $A_j \approx G_j$.
Then, using concentration of measure
we can argue that $\|PA_j\|_2 \approx \|PG_j\|_2 \sim \sqrt{l}$ with high probability
(and thus for all $j \in \{j_0\} \cup J_0$ simultaneously).
Substituting this into \eqref{eq: Av lower intro} we conclude that the nice lower bound
$$
\|Av\|_2 \gtrsim \frac{l^{3/2}}{\sqrt{n}}
$$
holds for all localized approximate eigenvectors $v$ corresponding 
to (approximate) eigenvalues $z$ that are very close to zero.

The challenging part of our argument is for $z$ not close to zero, namely when  
the diagonal parts $Pe_j$ dominates in the representation $PA_j = PG_j - z Pe_j$.
Estimating the magnitudes of $\|Pe_j\|_2$ might
be as difficult as the original delocalization problem.
However, it turns out that using concentration, it is possible to 
compare the terms $\|Pe_j\|_2$ with each other without knowing their magnitudes. 
This will require a careful construction of a test projection $P$.

\medskip

The rest of the paper is organized as follows.
In Section~\ref{s: preliminaries}, we recall some known linear algebraic and probabilistic facts.
In Section~\ref{s: reduction}, we rigorously develop the argument that was informally
described above. It reduces the delocalization problem to finding a test projection $P$
for which the norms of the columns $Pe_j$ have similar magnitudes.
In Section~\ref{s: distances}, we shall develop a helpful tool for estimating $\|Pe_j\|_2$, an estimate
of the distance between anisotropic random vectors and subspaces.
In Section~\ref{s: test projection}, we shall express $\|Pe_j\|_2$ in terms of such distances,
and thus will be able to compare these terms with each other.
In Section~\ref{s: proof} we deduce Theorem~\ref{thm: main approximate}.
Finally, Appendix contains auxiliary results on the smallest singular values of random matrices.

\section{Notation and preliminaries}				\label{s: preliminaries}

We shall work with random variables $\xi$ which satisfy the following assumption.

\begin{assumption}						\label{ass}
  $\xi$ is either real valued and satisfies
  \begin{equation}				\label{eq: mean var subgauss}
  \E \xi=0, \quad \E \xi^2 \ge 1, \quad \|\xi\|_{\psi_2} \le K,
  \end{equation}
  or $\xi$ is complex valued, where $\re \xi$ and $\im \xi$ are independent
  random variables each satisfying the three conditions in \eqref{eq: mean var subgauss}.
\end{assumption}

We will establish the conclusion of Theorem~\ref{thm: main approximate}
for random matrices $G$ with independent entries that satisfy Assumption ~\ref{ass}.
Thus we will simultaneously treat the real case and the complex case discussed in Remark~\ref{rem: complex}.

We will regard the parameter $K$ in Assumption~\ref{ass} as a constant, thus
$C, C_1, c, c_1, \ldots$ will denote positive numbers that may depend on $K$ only;
their values may change from line to line.

Without loss of generality, we can assume that $G$, as well as various other matrices
that we will encounter in the proof, have full rank.
This can be achieved by a perturbation argument, where one adds to $G$ 
an independent Gaussian random matrix $G'$ whose all entries are independent $N(0,\s^2)$ 
random variables with sufficiently small $\s>0$.  
Such perturbation will not affect the proof of Theorem \ref{thm: main approximate} since any $\e$-approximate eigenvector of $G$ will be a $(2\e)$-approximate eigenvector of $G'$ whenever $\norm{G-G'}<\e$.

By $\E_X$, $\P_X$ we denote the conditional expectation and probability with respect to a random variable $X$,
conditioned on all other variables.

The orthogonal projection onto a subspace $E$ of $\C^m$ is denoted $P_E$.
The canonical basis of $\C^n$ is denoted $e_1,\ldots,e_n$.

Let $A$ be an $m \times n$ matrix; $\|A\|$ and $\|A\|_\HS$ denote
the operator norm and Hilbert-Schmidt (Frobenius) norm of $A$, respectively.
The singular values $s_i(A)$ are the eigenvalues of $(A^*A)^{1/2}$
arranged in a non-increasing order; thus $s_1(A) \ge \cdots \ge s_r(A) \ge 0$ where $r = \min(m,n)$.
The extreme singular values have special meaning, namely
$$
s_1(A) = \|A\| = \max_{x \in S^{n-1}} \|Ax\|_2, \quad
s_m(A) = \|A^\dagger\|^{-1} = \min_{x \in S^{n-1}} \|Ax\|_2 \quad \text{(if $m \ge n$)}.
$$
Here $A^\dagger$ denotes the Moore-Penrose pseudoinverse of $A$, see e.g. \cite{GVL}.
We will need a few elementary properties of singular values.

\begin{lemma}[Smallest singular value]				\label{lem: smallest sv}
  Let $A$ be an $m \times n$ matrix and $r = \rank(A)$.
  \begin{enumerate}[(i)]
    \item Let $P$ denote the orthogonal projection in $\R^n$ onto $\im(A^*)$.
      Then $\|Ax\|_2 \ge s_r(A) \|Px\|_2$ for all $x \in \R^n$.
    \item Let $r=m$.
      Then for every $y \in \R^m$, the vector $x = A^\dagger y \in \R^n$ satisfies
      $y = Ax$ and $\|y\|_2 \ge s_m(A) \|x\|_2$. 
    \end{enumerate}
\end{lemma}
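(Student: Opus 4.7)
The plan is to obtain both parts from the singular value decomposition of $A$ in its compact form. I would write $A = U \Sigma V^*$, where $U$ is an $m \times r$ matrix with orthonormal columns, $V$ is an $n \times r$ matrix with orthonormal columns, and $\Sigma = \diag(s_1(A),\ldots,s_r(A))$ is $r \times r$ with strictly positive diagonal. The key algebraic facts I intend to invoke are that $\im(A^*) = \im(V)$, so the orthogonal projection in part (i) is $P = VV^*$, and that $U^*U = V^*V = I_r$.

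For part (i), the argument is a direct computation. First I would observe that $Ax = U\Sigma V^* x$, and since $U$ has orthonormal columns, $\|Ax\|_2 = \|\Sigma V^* x\|_2$. Because the diagonal entries of $\Sigma$ are at least $s_r(A)$, this is bounded below by $s_r(A) \|V^* x\|_2$. To finish, I would note that $\|V^* x\|_2 = \|VV^* x\|_2 = \|Px\|_2$, again using orthonormality of the columns of $V$.

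For part (ii), since $r = m$ the matrix $A$ has full row rank, and the Moore-Penrose pseudoinverse admits the closed form $A^\dagger = A^*(AA^*)^{-1}$. Setting $x = A^\dagger y$ gives $Ax = AA^\dagger y = y$ immediately. For the norm bound, I would apply part (i) to this $x$, obtaining $\|y\|_2 = \|Ax\|_2 \ge s_m(A)\|Px\|_2$; since $x = A^*(AA^*)^{-1} y$ lies in $\im(A^*)$, one has $Px = x$ and the desired inequality follows.

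There is no real obstacle here — the statement is a standard consequence of the compact SVD and should only take a few lines. The only point to be careful about is matching conventions: ensuring the projection $P$ in (i) is onto $\im(A^*) \subset \R^n$ rather than the range of $A$, and writing the SVD in the reduced form so that $\Sigma$ is invertible and the factor $V$ supplies exactly the projection we need.
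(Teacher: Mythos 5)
Your proof is correct. The paper states this lemma without proof, treating it as an elementary fact about singular values, so there is no argument in the paper to compare against; your compact-SVD derivation is the natural one and fills the gap cleanly. Both parts check out: in (i), the chain $\|Ax\|_2 = \|\Sigma V^*x\|_2 \ge s_r(A)\|V^*x\|_2 = s_r(A)\|Px\|_2$ is valid because $U$ and $V$ have orthonormal columns and $\Sigma$ has positive diagonal bounded below by $s_r(A)$; in (ii), the identity $A^\dagger = A^*(AA^*)^{-1}$ holds since $A$ has full row rank, $AA^\dagger = I_m$ gives $y = Ax$, and the observation $x \in \im(A^*)$ lets you invoke part (i) with $Px = x$.
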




Appendix~\ref{app} contains estimates of the smallest singular values of random matrices.

\medskip

Next, we state a concentration property of sub-gaussian random vectors.

\begin{theorem}[Sub-gaussian concentration]		 				\label{thm: concentration}
  Let $A$ be a fixed $m \times n$ matrix.
  Consider a random vector $X = (X_1,\ldots,X_n)$ with independent components
  $X_i$ which satisfy Assumption~\ref{ass}.
  \begin{enumerate}[(i)]
    \item {\em (Concentration)} For any $t \ge 0$, we have
      $$
      \Pr{ \big| \|AX\|_2 - M \big| > t }
      \le 2 \exp \Big( - \frac{c t^2}{\|A\|^2} \Big)
      $$
      where $M = (\E\|AX\|_2^2)^{1/2}$ satisfies $\|A\|_\HS \le M \le K \|A\|_\HS$.
    \item {\em (Small ball probability)} For every $y \in \R^m$, we have
      $$
      \Pr{ \|AX-y\|_2 < \frac{1}{6} (\|A\|_\HS + \|y\|_2) }
      \le 2 \exp \Big( - \frac{c \|A\|_\HS^2}{\|A\|^2} \Big).
      $$
    \end{enumerate}
  In both parts, $c = c(K) > 0$ is polynomial in $K$. 
\end{theorem}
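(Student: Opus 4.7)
The plan is to establish part (i) via standard concentration for convex Lipschitz functions of a sub-gaussian vector, and then to derive part (ii) from (i) by a short second-moment argument.

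For (i), I first compute $M$. Since the $X_j$ are independent with mean zero,
\begin{equation*}
M^2 = \E\|AX\|_2^2 = \sum_{i,j} |A_{ij}|^2 \E|X_j|^2,
\end{equation*}
and the two-sided bound on $M$ follows immediately from $\E|X_j|^2 \ge 1$ and $\E|X_j|^2 \le CK^2$ (the latter by plugging $p=2$ into the definition of $\|\cdot\|_{\psi_2}$; the absolute constant is absorbed into $K$). For the tail, note that $f(x) = \|Ax\|_2$ is convex and $\|A\|$-Lipschitz, so one invokes the standard sub-gaussian concentration inequality for convex Lipschitz functions of independent $\psi_2$-bounded random variables with constant $c$ polynomial in $K$. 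An equivalent route is the Hanson--Wright inequality applied to $\|AX\|_2^2 - M^2$, combined with the identity $|\|AX\|_2 - M| \le |\|AX\|_2^2 - M^2| / M$ for the lower tail.

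For (ii), observe that the shifted map $g(x) = \|Ax - y\|_2$ is again convex and $\|A\|$-Lipschitz, so applying (i) directly to $g$ yields
\begin{equation*}
\Pr{|Z - \tilde{M}| > t} \le 2 \exp\!\Big(-\frac{ct^2}{\|A\|^2}\Big), \qquad Z := \|AX - y\|_2,\ \ \tilde{M} := \E Z.
\end{equation*}
A direct computation using $\E X = 0$ gives
\begin{equation*}
\E Z^2 = M^2 + \|y\|_2^2 \ge \|A\|_{\HS}^2 + \|y\|_2^2 \ge \tfrac{1}{2}(\|A\|_{\HS} + \|y\|_2)^2,
\end{equation*}
and the tail bound above implies $\Var Z \le C \|A\|^2$.

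We may assume $\|A\|_{\HS}^2 \ge C_0 \|A\|^2$ for a sufficiently large absolute constant $C_0$, since otherwise the right-hand side of (ii) exceeds $1$ and the claim is vacuous. In that regime, $\tilde{M}^2 = \E Z^2 - \Var Z \ge \tfrac{1}{4}(\|A\|_{\HS} + \|y\|_2)^2$, hence $\tilde{M} \ge \tfrac{1}{2}(\|A\|_{\HS} + \|y\|_2)$. The event $\{Z < \tfrac{1}{6}(\|A\|_{\HS} + \|y\|_2)\}$ then forces $\tilde{M} - Z > \tfrac{1}{3}(\|A\|_{\HS} + \|y\|_2) \ge \tfrac{1}{3}\|A\|_{\HS}$, and plugging $t = \tfrac{1}{3}\|A\|_{\HS}$ into the concentration tail bound gives the claimed small-ball estimate (after relabelling the constant $c$). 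The only real obstacle is the convex Lipschitz concentration used in part (i): the classical Talagrand inequality is stated for bounded entries, whereas here the $X_j$ are only $\psi_2$. This is handled by either route sketched above, and the polynomial dependence of $c$ on $K$ is preserved throughout the reduction.
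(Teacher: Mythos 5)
The paper does not give its own proof of this theorem; it cites the Hanson--Wright route (\cite{RV Hanson-Wright} for part (i), \cite{LMOT} for part (ii)). Your proposal is partly a reconstruction of that route and partly a genuinely different reduction, so let me separate the two.

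Your derivation of part (ii) \emph{from} a concentration-around-the-mean estimate for $Z=\|AX-y\|_2$ is a clean and correct argument, and it is a different packaging from the direct small-ball proof in \cite{LMOT}: you compute $\E Z^2 = M^2 + \|y\|_2^2 \ge \tfrac12(\|A\|_\HS+\|y\|_2)^2$, bound $\Var Z \lesssim \|A\|^2$ from the tail, dispose of the regime $\|A\|_\HS^2 \lesssim \|A\|^2$ as vacuous, conclude $\E Z \ge \tfrac12(\|A\|_\HS+\|y\|_2)$, and apply the concentration bound at $t=\tfrac13\|A\|_\HS$. This is correct; the only cosmetic point is that the threshold constant ``$C_0$'' in your vacuity step must be allowed to depend on $K$ (since $\Var Z \le C(K)\|A\|^2$), which is harmless because the final $c$ is permitted to depend on $K$.

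The genuine gap is in the input you feed into that reduction. Your primary route is ``convex $\|A\|$-Lipschitz $\Rightarrow$ sub-gaussian concentration'' for independent $\psi_2$ coordinates. This is \emph{not} a theorem in the generality you need: Talagrand's convex-distance inequality requires bounded coordinates, and for log-concave coordinates one has full Gaussian (even non-convex Lipschitz) concentration, but for coordinates that are merely sub-gaussian there is no off-the-shelf convex-Lipschitz concentration inequality to invoke. You flag this at the end but then say it ``is handled by either route,'' which is circular -- one of the two routes \emph{is} the obstacle. So the load-bearing route is Hanson--Wright, exactly as the paper cites. For part (i) that route works as you describe (with the standard case analysis at $t \gtrsim K\|A\|_\HS$ to pass from concentration of $\|AX\|_2^2$ to concentration of $\|AX\|_2$). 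For part (ii), however, you cannot apply (i) ``directly to $g(x)=\|Ax-y\|_2$'' once you commit to Hanson--Wright, because $\|AX-y\|_2^2 = X^\ast A^\ast A X - 2\re\langle A^\ast y, X\rangle + \|y\|_2^2$ is a quadratic form \emph{plus a linear term}; you need to add a one-line sub-gaussian bound for $\langle A^\ast y, X\rangle$ (which has $\psi_2$-norm $\lesssim K\|A\|\|y\|_2$) and then combine. Once that is spelled out, your second-moment reduction closes the argument. In short: nice and somewhat novel structure for (ii), but the ``convex Lipschitz'' crutch should be removed and the linear-term step made explicit.
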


This result can be deduced from Hanson-Wright inequality. For part (ii), this was done
in \cite{LMOT}. A modern proof of Hanson-Wright inequality and deduction of both
parts of Theorem~\ref{thm: concentration} are discussed in \cite{RV Hanson-Wright}.
There $X_i$ were assumed to have unit variances; the general case follows
by a standard normalization step.

Sub-gaussian concentration paired with a standard covering argument yields
the following result on norms of random matrices, see \cite{RV Hanson-Wright}.

\begin{theorem}[Products of random and deterministic matrices]			\label{thm: product norm}
  Let $B$ be a fixed $m \times N$ matrix, and $G$ be an $N \times n$
  random matrix with independent entries that satisfy
  $\E G_{ij} = 0$, $\E G_{ij}^2 \ge 1$ and $\|G_{ij}\|_{\psi_2} \le K$.
  Then for any $s, t \ge 1$ we have
  $$
  \Pr{ \|BG\| > C(s \|B\|_\HS + t \sqrt{n} \|B\|) } \le 2 \exp(- s^2 r - t^2 n).
  $$
  Here $r = \|B\|_\HS^2 / \|B\|_2^2$ is the stable rank of $B$, and
  $C=C(K)$ is polynomial in $K$.
\end{theorem}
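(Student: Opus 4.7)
The plan is to derive the estimate from the pointwise sub-gaussian concentration in Theorem~\ref{thm: concentration}(i) combined with a standard $\e$-net argument on the sphere $S^{n-1}$. The starting observation is that for a fixed $x \in S^{n-1}$, the random vector $Y := Gx \in \R^N$ has independent coordinates $Y_i = \sum_{j=1}^n G_{ij} x_j$ (since distinct rows of $G$ are independent), each with mean zero, second moment at least $\sum_j (\E G_{ij}^2)\, x_j^2 \ge \|x\|_2^2 = 1$, and sub-gaussian norm $\|Y_i\|_{\psi_2} \le C_1(K)$ by the standard bound on the $\psi_2$ norm of weighted sums of independent sub-gaussians. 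Thus $Y$ satisfies Assumption~\ref{ass} with a parameter $K_1 = K_1(K)$, and applying Theorem~\ref{thm: concentration}(i) to the fixed matrix $B$ and the random vector $Y$ yields
$$
\Pr{ \|BGx\|_2 > K_1 \|B\|_\HS + u } \le 2 \exp\!\left( -c u^2 / \|B\|^2 \right) \qquad \text{for every } u \ge 0,
$$
where $c = c(K) > 0$ is polynomial in $K$.

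Next I would upgrade this pointwise bound to a uniform one. Fix a $(1/2)$-net $\NN \subset S^{n-1}$ with $|\NN| \le 5^n$, and use the standard fact $\|BG\| \le 2 \max_{x \in \NN} \|BGx\|_2$. Set $u := C_0 (s \|B\|_\HS + t \sqrt{n}\, \|B\|)$ with $C_0 = C_0(K)$ to be chosen. Using $(a+b)^2 \ge a^2 + b^2$ for $a,b \ge 0$ we obtain
$$
u^2 / \|B\|^2 \ge C_0^2 \bigl( s^2 r + t^2 n \bigr), \qquad r = \|B\|_\HS^2/\|B\|^2.
$$
Choosing $C_0$ large enough that $c C_0^2 \ge \log 5 + 2$ and applying a union bound over $\NN$ then gives
$$
\Pr{ \max_{x \in \NN} \|BGx\|_2 > K_1 \|B\|_\HS + u } \le 5^n \cdot 2 \exp\!\bigl( -c C_0^2 (s^2 r + t^2 n) \bigr) \le 2 \exp\!\bigl( -s^2 r - t^2 n \bigr),
$$
where the last inequality uses $t \ge 1$ to absorb the $n \log 5$ term into $t^2 n$. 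Since $\|BG\| \le 2(K_1 \|B\|_\HS + u)$ on the complementary event, and $s \ge 1$ allows one to absorb the additive term $K_1 \|B\|_\HS$ into a constant multiple of $s \|B\|_\HS$, this yields $\|BG\| \le C(s \|B\|_\HS + t \sqrt{n}\, \|B\|)$ with the claimed probability, and $C = C(K)$ inherits polynomial dependence on $K$ from that of $c$.

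The real conceptual input is already contained in Theorem~\ref{thm: concentration}; here the only points requiring care are (i) verifying that $Y = Gx$ genuinely satisfies Assumption~\ref{ass} with constants polynomial in $K$, which is a routine moment computation, and (ii) choosing $C_0$ large enough that the concentration exponent dominates the entropy of the net. Neither is a serious obstacle; the subtle feature of the statement is simply that the pointwise bound already separates the two scales $\|B\|_\HS$ and $\sqrt{n}\,\|B\|$, which is what makes the result strong enough to apply to matrices $B$ of arbitrary stable rank $r \le n$.
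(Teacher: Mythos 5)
Your proof is correct and follows the same route the paper indicates (and delegates to \cite{RV Hanson-Wright}): one applies the sub-gaussian concentration of Theorem~\ref{thm: concentration}(i) to the random vector $Gx$ with independent coordinates for each fixed $x\in S^{n-1}$, then runs a standard $(1/2)$-net and union-bound argument, choosing $u$ large enough that the exponent $cu^2/\|B\|^2 \gtrsim s^2 r + t^2 n$ dominates the entropy $n\log 5$ via $t\ge 1$. The verification that $Y_i=\sum_j G_{ij}x_j$ satisfies Assumption~\ref{ass} with a constant polynomial in $K$, and the absorption of the additive $K_1\|B\|_\HS$ using $s\ge 1$, are both handled correctly.
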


\begin{remark}					\label{rem: product norm special cases}
A couple of special cases in Theorem~\ref{thm: product norm} are worth mentioning.
If $B=P$ is a projection in $\R^N$ of rank $r$, then
$$
\Pr{ \|PG\| > C(s \sqrt{r} + t \sqrt{n}) } \le 2 \exp(- s^2 r - t^2 n).
$$
The same holds if $B=P$ is an $r \times N$ matrix such that $P P^* = I_r$.

In particular, for $B = I_N$ we obtain
$$
\Pr{ \|G\| > C(s \sqrt{N} + t \sqrt{n}) } \le 2 \exp(- s^2 N - t^2 n).
$$
\end{remark}

\section{Reducing delocalization to the existence of a test projection}		\label{s: reduction}

We begin to develop a geometric approach to delocalization of random matrices.
The first step, which we discuss in this section, is presented for a general random matrix $A$.
Later it will be used for $A = G - z I_n$ where $G$ is the random matrix from Theorem~\ref{thm: main approximate} and
$z \in \C$.

We will first try to bound the probability of the following {\em localization event} for
a random matrix $A$ and parameters $l,W,w>0$:
\begin{equation}				\label{eq: LL}
\LL_{W,w} =
\Big\{ \exists v \in S^{n-1}:
  \|v\|_\infty > W \sqrt{\frac{l}{n}}
  \text{ and }
  \|Av\|_2 \le \frac{w}{\sqrt{n}} \Big\}.
\end{equation}
We will show that $\LL_{W,w}$ is unlikely
for $l \sim \log^2 n$, $W \sim \log^{7/2} n$ and $w=\const$.

\medskip
In this section, we reduce our task to the existence of a certain linear
map $P$ which reduces dimension from $n$ to $\sim l$,
and which we call a {\em test projection}.

To this end,
given an $m \times n$ matrix $B$, we shall denote by $B_j$ the $j$-th column of $B$,
and for a subset $J \subseteq [n]$, we denote by $B_J$ the submatrix of $B$
formed by the columns indexed by $J$.
Fix $n$ and $l \le n$, and define the set of pairs
$$
\Lambda = \Lambda(n,l) = \big\{ (j,J) :\; j \in [n], \, J \subseteq [n] \setminus \{j\}, \, |J|=l-1 \big\}.
$$
We equip $\Lambda$ with the uniform probability measure.

\begin{proposition}[Delocalization from test projection]				\label{prop: deloc from P}
  Let $l \le n$.
  Consider an $n \times n$ random matrix $A$ with an arbitrary distribution.
  Suppose that to each $(j_0, J_0) \in \Lambda$ corresponds a number $l' \le n$ and
  an $l' \times n$ matrix $P = P(n, l, A, j_0, J_0)$ with the following properties:
  \begin{enumerate}[(i)]
    \item $\|P\| \le 1$;
    \item $\ker(P) \supseteq \{A_j\}_{j \not\in \{j_0\} \cup J_0}$.
  \end{enumerate}
  Let $\a, \kappa> 0$. Let $w>0$ and $W = \frac{w}{\kappa l} + \frac{\sqrt{2}}{\a}$.
  Then we can bound the probability of the localization event \eqref{eq: LL} as follows:
  $$
  \Prr[A]{ \LL_{W,w} }
  \le 2 n \cdot \E_{(j_0,J_0)} \Prr[A]{\BB_{\a,\kappa}^c \,|\, (j_0, J_0)}
  $$
  where $\BB_{\a,\kappa}$ denotes the following {\em balancing event}:
  \begin{equation}				\label{eq: balancing}
  \BB_{\a,\kappa}
  = \left\{ \|PA_{j_0}\|_2 \ge \a \|PA_{J_0}\| \text{ and } \|PA_{j_0}\|_2 \ge \kappa \sqrt{l} \right\}.
  \end{equation}
\end{proposition}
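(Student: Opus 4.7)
The plan is to carry out the geometric strategy from the introduction rigorously. The key idea is to couple $A$ with an independent uniform draw $(j_0, J_0) \in \Lambda$ and conclude by a Fubini argument on the joint probability space. First I would fix $A \in \LL_{W,w}$ together with a witness $v = v(A) \in S^{n-1}$ satisfying $\|v\|_\infty > W\sqrt{l/n}$ and $\|Av\|_2 \le w/\sqrt{n}$, and let $j^\star(v)$ be some index at which $|v_{j^\star}| = \|v\|_\infty$. Hypothesis (ii) annihilates every column $A_j$ with $j \notin \{j_0\} \cup J_0$, so that
\[
PAv = v_{j_0}\,PA_{j_0} + (PA_{J_0})\,v_{J_0},
\]
where $v_{J_0}$ is the restriction of $v$ to coordinates in $J_0$. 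Combining $\|P\| \le 1$ with the triangle inequality and the operator-norm bound $\|(PA_{J_0})\,v_{J_0}\|_2 \le \|PA_{J_0}\|\,\|v_{J_0}\|_2$ yields the basic inequality
\[
\|Av\|_2 \ge \|PAv\|_2 \ge |v_{j_0}|\,\|PA_{j_0}\|_2 - \|PA_{J_0}\|\,\|v_{J_0}\|_2.
\]

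On the balancing event $\BB_{\a,\kappa}$ one has $\|PA_{J_0}\| \le \|PA_{j_0}\|_2 / \a$ and $\|PA_{j_0}\|_2 \ge \kappa \sqrt{l}$, so this improves to
\[
\|Av\|_2 \ge \kappa \sqrt{l}\,\Bigl( |v_{j_0}| - \tfrac{1}{\a}\|v_{J_0}\|_2 \Bigr).
\]
Next I would identify a ``good'' set of draws $(j_0, J_0)$ on which this bound contradicts localization. The choice $j_0 = j^\star(v)$ occurs with $(j_0,J_0)$-probability $1/n$ and forces $|v_{j_0}| > W\sqrt{l/n}$. Conditionally on $j_0$, a direct first-moment calculation gives $\E_{J_0}\|v_{J_0}\|_2^2 = \tfrac{l-1}{n-1}\sum_{j \ne j_0} v_j^2 \le (l-1)/(n-1) \le l/n$, so by Markov $\|v_{J_0}\|_2 \le \sqrt{2l/n}$ with conditional probability at least $1/2$. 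Let $E(v)$ denote the intersection of these two $(j_0,J_0)$-conditions, so that $\P_{(j_0,J_0)}(E(v)) \ge 1/(2n)$. On $\LL_{W,w} \cap E(v(A)) \cap \BB_{\a,\kappa}$ the lower bound now reads
\[
\|Av\|_2 > \kappa\sqrt{l}\cdot\sqrt{l/n}\,\Bigl( W - \tfrac{\sqrt 2}{\a} \Bigr) = \frac{\kappa l}{\sqrt n}\cdot\frac{w}{\kappa l} = \frac{w}{\sqrt n},
\]
by the prescribed choice $W = \tfrac{w}{\kappa l} + \tfrac{\sqrt 2}{\a}$, contradicting $\|Av\|_2 \le w/\sqrt n$. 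Hence this triple intersection is empty.

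To extract the claimed probability estimate, Fubini on the independent product of $A$ and $(j_0, J_0)$ yields
\begin{align*}
\tfrac{1}{2n}\,\Prr[A]{\LL_{W,w}}
&\le \Prr[A,(j_0,J_0)]{\LL_{W,w} \cap E(v(A))} \\
&\le \Prr[A,(j_0,J_0)]{\BB_{\a,\kappa}^{c}}
= \E_{(j_0,J_0)} \Prr[A]{\BB_{\a,\kappa}^{c} \mid (j_0,J_0)},
\end{align*}
and multiplying through by $2n$ gives the conclusion. The only genuinely delicate point is the measurability of the selection $A \mapsto v(A)$ on $\LL_{W,w}$, so that $E(v(A))$ is a measurable event in the joint space; this is routine and can be handled either by a measurable-selection theorem or by replacing ``$\exists v \in S^{n-1}$'' in the definition of $\LL_{W,w}$ by a quantifier over a fine $\e$-net of the sphere (absorbing the extra $\e$-slack into the constants). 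I do not expect any serious obstacle beyond this bookkeeping.
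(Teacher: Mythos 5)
Your proof is correct and follows essentially the same route as the paper: the same basic inequality $\|Av\|_2 \ge |v_{j_0}|\|PA_{j_0}\|_2 - \|v_{J_0}\|_2\|PA_{J_0}\|$, the same first-moment/Markov bound on $\|v_{J_0}\|_2^2$ to control the ``good'' set of $(j_0,J_0)$ with probability $\ge 1/(2n)$, and the same $W$-balancing computation. The only difference is cosmetic: the paper shows for each fixed $A$ that $\LL_{W,w}$ forces $\P_{(j_0,J_0)}(\BB_{\a,\kappa}^c \mid A) \ge 1/(2n)$ and then applies Markov plus Fubini, whereas you argue directly on the joint space via an empty triple intersection $\LL_{W,w}\cap E(v(A))\cap\BB_{\a,\kappa}=\emptyset$; the paper's per-$A$ phrasing sidesteps the measurable-selection point you flag, but as you note this is routine bookkeeping.
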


Proposition~\ref{prop: deloc from P} states that in order to establish delocalization 
(as encoded by the complement of the event $\LL_{W,w}$), it is enough to find a test projection $P$ 
which satisfies the balancing property $\BB_{\a,\kappa}$.

\begin{proof}
Let $v \in S^{n-1}$, $(j_0, J_0) \in \Lambda$ let $P$ be as in the statement.
Using the properties (i) and (ii) of $P$, we have
\begin{align}
\|Av\|_2
  &\ge \|PAv\|_2	
  = \Big\| \sum_{j=1}^n v_j PA_j \Big\|_2
  = \Big\| v_{j_0} PA_{j_0} + \sum_{j \in J_0} v_j PA_j \Big\|_2 \nonumber\\
  &\ge |v_{j_0}| \|PA_{j_0}\|_2 - \Big( \sum_{j \in J_0} |v_j|^2 \Big)^{1/2} \|PA_{J_0}\|.		\label{eq: Av}
\end{align}
The event $\BB_{\alpha, \kappa}$ will help us balance
the norms $\|PA_{j_0}\|_2$ and $\|PA_{J_0}\|$, while the following
elementary lemma will help us balance the coefficients $v_i$.

\begin{lemma}[Balancing the coefficients of $v$]					\label{lem: balancing v}
  For a given $v \in S^{n-1}$ and for random $(j_0, J_0) \in \Lambda$, define
  the event
  $$
  \VV_v = \Big\{ |v_{j_0}| = \|v\|_\infty \text{ and } \sum_{j \in J_0} |v_j|^2 \le \frac{2l}{n} \Big\}.
  $$
  Then 
  $$
  \Prr[(j_0,J_0)]{\VV_v} \ge \frac{1}{2n}.
  $$
\end{lemma}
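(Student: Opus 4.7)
The plan is to bound the probability of the two conditions defining $\VV_v$ sequentially, using that the marginal of $j_0$ is uniform and that conditionally on $j_0$ the set $J_0$ is uniform over $(l-1)$-subsets of $[n]\setminus\{j_0\}$. Since the uniform measure on $\Lambda$ factors as: $j_0$ uniform on $[n]$, then $J_0$ uniform on $\binom{[n]\setminus\{j_0\}}{l-1}$, these two stages can be analyzed independently.

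First I would handle the max-coordinate condition. Let $S = \{j \in [n]: |v_j| = \|v\|_\infty\}$. Since $S$ is nonempty, $\Prr[(j_0,J_0)]{|v_{j_0}| = \|v\|_\infty} = |S|/n \ge 1/n$. It suffices, then, to show that conditionally on any choice of $j_0 \in S$,
$$
\Prr[J_0]{ \sum_{j \in J_0} |v_j|^2 \le \frac{2l}{n} \;\Big|\; j_0 } \ge \frac{1}{2},
$$
since multiplying the two lower bounds yields the claimed $1/(2n)$.

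For the conditional bound I would compute the conditional expectation and apply Markov's inequality. Fixing $j_0$, the distribution of $J_0$ is uniform on $(l-1)$-subsets of $[n]\setminus\{j_0\}$, so each $j \ne j_0$ lies in $J_0$ with probability $(l-1)/(n-1)$. Hence, using $\|v\|_2 = 1$,
$$
\E\Big[ \sum_{j \in J_0} |v_j|^2 \;\Big|\; j_0 \Big]
= \frac{l-1}{n-1} \sum_{j \ne j_0} |v_j|^2
\le \frac{l-1}{n-1} \le \frac{l}{n},
$$
where the last inequality holds because $l \le n$. Markov's inequality applied to the nonnegative random variable $\sum_{j \in J_0} |v_j|^2$ then gives
$$
\Prr[J_0]{ \sum_{j \in J_0} |v_j|^2 > \frac{2l}{n} \;\Big|\; j_0 } \le \frac{l/n}{2l/n} = \frac{1}{2},
$$
which is exactly what is needed.

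Combining, $\Prr{\VV_v} \ge (1/n)\cdot(1/2) = 1/(2n)$. There is no real obstacle here; the only subtlety worth being careful about is that $j_0$ and $J_0$ are not independent (the support of $J_0$ depends on $j_0$), but the argument only uses the conditional distribution of $J_0$ given $j_0$, so this causes no difficulty.
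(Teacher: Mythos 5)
Your proof is correct and follows essentially the same route as the paper's: condition on $j_0$ hitting a maximizing coordinate (probability $\ge 1/n$), then apply Markov's inequality to the conditional expectation of $\sum_{j\in J_0}|v_j|^2$. A minor point in your favor: you correctly use the conditional inclusion probability $\frac{l-1}{n-1}$ and then bound $\frac{l-1}{n-1}\le \frac{l}{n}$ using $l\le n$, whereas the paper writes the expectation with denominator $n$ (which is the same thing after absorbing the factor $1-|v_{k_0}|^2\le \frac{n-1}{n}$, but is stated more tersely).
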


\begin{proof}[Proof of Lemma~\ref{lem: balancing v}]
Let $k_0 \in [n]$ denote a coordinate for which $|v_{k_0}| = \|v\|_\infty$.
Then
\begin{equation}				\label{eq: total probability}
\Prr[(j_0,J_0)]{\VV_v} \ge \Prr[(j_0,J_0)]{\VV_v \,|\, j_0=k_0} \cdot \Pr[(j_0,J_0)]{j_0=k_0}.
\end{equation}
Conditionally on $j_0=k_0$, the distribution of $J_0$ is uniform in the
set $\{ J \subseteq [n] \setminus \{k_0\}, \, |J|=l-1 \}$. Thus using
Chebyshev's inequality we obtain
\begin{align*}
\Prr[(j_0,J_0)]{\VV_v^c \,|\, j_0=k_0}
  &= \Pr[J_0]{ \sum_{j \in J_0} |v_j|^2 > \frac{2l}{n} \,\Big|\, j_0=k_0}\\
  &\le \frac{n}{2l} \E_{J_0} \left[ \sum_{j \in J_0} |v_j|^2 \,\Big|\, j_0=k_0 \right]\\
  &= \frac{n}{2l} \cdot \frac{l-1}{n} (\|v\|_2^2 - |v_{k_0}|^2)
  \le \frac{1}{2}.
\end{align*}
Moreover, $\Pr[(j_0,J_0)]{j_0=k_0} = \frac{1}{n}$. Substituting into \eqref{eq: total probability},
we complete the proof.
\end{proof}

Assume that a realization of the random matrix $A$ satisfies
\begin{equation}				\label{eq: Bal conditioned on A}
\Prr[(j_0,J_0)]{\BB_{\a,\kappa} \,|\, A} > 1- \frac{1}{2n}.
\end{equation}
(We will analyze when this event occurs later.)
Combining with the conclusion of Lemma~\ref{lem: balancing v},
we see that there exists $(j_0,J_0) \in \Lambda$ such that
both events $\VV_v$ and $\BB_{\a,\kappa}$ hold.
Then we can continue estimating $\|Av\|_2$ in \eqref{eq: Av}
using $\VV_v$ and $\BB_{\a,\kappa}$ as follows:
$$
\|Av\|_2
\ge \|v\|_\infty \|PA_{j_0}\|_2 - \sqrt{\frac{2l}{n}} \, \|PA_{J_0}\|
\ge \Big( \|v\|_\infty - \frac{1}{\a} \sqrt{\frac{2l}{n}} \Big) \kappa \sqrt{l},
$$
provided the right hand side is non-negative.
In particular, if $\|v\|_\infty > W \sqrt{l/n}$ where $W = \frac{w}{\kappa l} + \frac{\sqrt{2}}{\a}$,
then $\|Av\|_2 > w/\sqrt{n}$. Thus the localization event $\LL_{W,w}$ must fail.

Let us summarize.
We have shown that the localization event $\LL_{W,w}$
implies the failure of the event \eqref{eq: Bal conditioned on A}. The probability of
this failure can be estimated using Chebyshev's inequality and Fubini theorem as follows:
\begin{align*}
\Prr[A]{\LL_{W,w}}
  &\le \Prr[A]{ \Pr[(j_0,J_0)]{\BB_{\a,\kappa}^c \,|\, A} > \frac{1}{2n} } \\
  &\le 2n \cdot \E_A \Prr[(j_0,J_0)]{\BB_{\a,\kappa}^c \,|\, A}\\
  &= 2n \cdot  \E_{(j_0,J_0)} \Prr[A]{\BB_{\a,\kappa}^c \,|\, (j_0, J_0)}.
\end{align*}
This completes the proof of Proposition~\ref{prop: deloc from P}.
\end{proof}

\subsection{Strategy of showing that the balancing event is likely}
Our goal now is to construct a test projection $P$ as in Proposition~\ref{prop: deloc from P}
in such a way that the balancing event $\BB_{\alpha, \kappa}$ is likely
for the random matrix $A = G - z I_n$ and for fixed $(j_0, J_0) \in \Lambda$ and $z \in \C$.
We will be able to do this for $\a \sim (l \log^{3/2} n)^{-1}$ and $\kappa = c$.

We might choose $P$ to be the orthogonal projection with 
$$
\ker(P) = \{A_j\}_{j \not\in \{j_0\} \cup J_0}.
$$
In reality, $P$ will be a bit more adapted to $A$.
Let us see what it will take to prove the two inequalities defining the balancing event
$\BB_{\alpha, \kappa}$ in \eqref{eq: balancing}.
The second inequality can be deduced from the small ball probability estimate,
Theorem~\ref{thm: concentration}(ii). Turning to the first inequality,
note that 
$$
\|PA_{J_0}\| \sim \max_{j \in J_0} \|PA_j\|_2
$$ 
up to a polynomial factor in $|J_0| = l-1$ (thus logarithmic in $n$).
So we need to show that 
$$
\|PA_{j_0}\|_2 \gtrsim \|PA_j\|_2 \quad \text{for all } j \in J_0.
$$

Since $A = G - zI_n$, the columns $A_i$ of $A$ can be expressed as $A_i = G_i - z e_i$.
Thus, informally speaking, our task is to show that with high probability,
\begin{equation}				\label{eq: two terms balance}
\|PG_{j_0}\|_2 \gtrsim \|PG_j\|_2, \quad \|Pe_{j_0}\|_2 \gtrsim \|Pe_j\|_2
\quad \text{for all } j \in J_0.
\end{equation}
The first inequality can be deduced from
sub-gaussian concentration, Theorem~\ref{thm: concentration}.
The second inequality in \eqref{eq: two terms balance} is challenging,
and most of the remaining work is devoted to validating it.
Instead of estimating $\|Pe_j\|_2$, we will compare these terms with each other.

Later, in Proposition~\ref{prop: Pei via distances}, we will relate $\|Pe_j\|_2$
to distances between anisotropic random vectors and subspaces.
We will now digress to develop a general bound on such distances,
which may be interesting on its own.

\section{Distances between anisotropic random vectors and subspaces}		\label{s: distances}

We will be interested in the distribution of the distance $d(X, E_k)$ 
between a random vector $X \in \R^n$  
and a $k$-dimensional random subspace $E_k$ spanned 
by $k$ independent vectors $X_1,\ldots, X_k \in \R^n$. 
A number of arguments in random matrix theory that appeared in the recent years 
rely on controlling such distances, see e.g. 
\cite{TV pm1, TV ESD, RV square, RV rectangular, EYY bulk}.

Let us start with the {\em isotropic} case, where the random vectors in question 
have all independent coordinates. Here one can use Theorem~\ref{thm: concentration} 
to control the distances. 

\begin{proposition}[Distances between isotropic random vectors and subspaces]	\label{prop: distance}
  Let $1 \le k \le n$. Consider independent random vectors $X, X_1, X_2, \ldots, X_k$
  with independent coordinates satisfying Assumption~\ref{ass}.
  Consider the subspace $E_k = \Span(X_i)_{i=1}^k$.
  Then 
  $$
  \sqrt{n-k} \le \left( \E d(X, E_k)^2 \right)^{1/2} \le K \sqrt{n-k}.
  $$
  Furthermore, one has
  \begin{enumerate}[(i)]
    \item $\Pr{d(X, E_k) \le \frac{1}{2} \sqrt{n-k}} \le 2 \exp(-c(n-k))$;
    \item $\Pr{d(X, E_k) > 2K \sqrt{n-k}} \le 2 k \exp(-c(n-k))$.
   \end{enumerate}
  Here $c = c(K)>0$.
\end{proposition}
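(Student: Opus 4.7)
The natural approach is to write $d(X, E_k) = \|PX\|_2$ for the orthogonal projection $P = P_{E_k^\perp}$ onto the complement of $E_k$, and then apply the sub-gaussian concentration theorem (Theorem~\ref{thm: concentration}) conditionally on $X_1, \ldots, X_k$. The key parameters of the conditional problem are $\|P\| = 1$ and $\|P\|_\HS^2 = n - \dim(E_k)$, and one always has $\dim(E_k) \le k$.

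For the lower bound on the mean and for (i), it suffices to use $\|P\|_\HS \ge \sqrt{n-k}$. Theorem~\ref{thm: concentration}(i) gives $M := (\E_X \|PX\|_2^2)^{1/2}$ with $M \ge \|P\|_\HS \ge \sqrt{n-k}$, so integrating over $X_1, \ldots, X_k$ already yields $(\E d(X, E_k)^2)^{1/2} \ge \sqrt{n-k}$. For part (i), applying the concentration inequality with deviation $t = M - \tfrac12\sqrt{n-k} \ge \tfrac12\sqrt{n-k}$ gives
$$
\P\bigl\{\|PX\|_2 < \tfrac12\sqrt{n-k}\bigr\} \le 2\exp\bigl(-c(n-k)\bigr),
$$
and since this estimate is uniform in $X_1, \ldots, X_k$ it survives taking expectation.

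The upper bounds require the sharper identity $\|P\|_\HS^2 = n - k$, which is equivalent to $X_1, \ldots, X_k$ being linearly independent. This is where the main obstacle lies and is what produces the extra factor $k$ in (ii). I would introduce the event $\mathcal{I} = \{\dim E_k = k\}$ and control $\P(\mathcal{I}^c)$ by iterating small ball probabilities: for each $0 \le i < k$, conditioning on $X_1, \ldots, X_i$, let $Q_i$ denote the orthogonal projection onto $\Span(X_1, \ldots, X_i)^\perp$, so that $\|Q_i\|_\HS^2 \ge n-i$ and $\|Q_i\| = 1$. Theorem~\ref{thm: concentration}(ii) applied with $y = 0$ then gives
$$
\P\bigl\{X_{i+1} \in \Span(X_1, \ldots, X_i)\bigr\} \le \P\bigl\{\|Q_i X_{i+1}\|_2 < \tfrac16\sqrt{n-i}\bigr\} \le 2\exp\bigl(-c(n-i)\bigr),
$$
and a union bound over $i$ yields $\P(\mathcal{I}^c) \le 2k\exp(-c(n-k))$. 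Conditionally on $\mathcal{I}$ one has $M \le K\sqrt{n-k}$, and the concentration inequality with $t = K\sqrt{n-k}$ produces $\|PX\|_2 \le 2K\sqrt{n-k}$ except on a further set of probability $\le 2\exp(-c(n-k))$; combining with the bound on $\P(\mathcal{I}^c)$ gives (ii). Finally, the matching upper bound $(\E d^2)^{1/2} \le K\sqrt{n-k}$ follows by combining $\E[\|PX\|_2^2 \mathbf{1}_\mathcal{I}] \le K^2(n-k)$ with the exponentially small contribution from $\mathcal{I}^c$, invoking the perturbation convention of Section~\ref{s: preliminaries} to handle any degenerate configurations that might arise.
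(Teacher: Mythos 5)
Your proof is correct and takes a genuinely different (more explicit) route than the paper's, which is worth noting because it fills a small gap. The paper's proof is a one-liner: it invokes the Gaussian perturbation convention from Section~\ref{s: preliminaries} to assume $\dim(E_k)=k$ almost surely, sets $\|P_{E_k^\perp}\|_\HS = \sqrt{n-k}$, and applies Theorem~\ref{thm: concentration} directly. But this perturbation step is delicate specifically for the \emph{upper} bound (ii): since $d(X,E_k(\sigma))$ is only upper semi-continuous as $\sigma\to0$ (perturbing can only shrink the distance by enlarging the span), a high-probability upper bound for the perturbed distance does not automatically pass to the unperturbed one on the degeneracy event. You sidestep this entirely by directly bounding $\P(\mathcal{I}^c)$ via iterated small ball estimates, which cleanly produces the factor $k$ appearing in part (ii) --- a factor the paper's proof as written does not visibly generate. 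You also correctly observed that the lower bounds (the mean lower bound and part (i)) only use $\|P\|_\HS \ge \sqrt{n-k}$, which holds unconditionally, so no degeneracy analysis is needed there. The only soft spot in both your argument and the paper's is the mean upper bound $(\E d^2)^{1/2}\le K\sqrt{n-k}$: when $k$ is close to $n$, the contribution $\E[d^2\,\mathbf{1}_{\mathcal{I}^c}] \lesssim n\cdot k\exp(-c(n-k))$ is not a priori dominated by $K^2(n-k)$, and one really does need the perturbation convention (or a separate argument) to close that off, as you acknowledge. The core concentration step --- applying Theorem~\ref{thm: concentration} with $\|P\|=1$, $\|P\|_\HS=\sqrt{n-k}$, and $t=\frac12\sqrt{n-k}$ or $t=K\sqrt{n-k}$ --- is identical in both proofs.
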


\begin{proof}
By adding small independent Gaussian perturbations to the vectors $X_j$, we can 
assume that $\dim(E_k) = k$ almost surely.
We can represent the distance as
$$
d(X,E_k) = \|P_{E_k^\perp} X\|_2.
$$
Since $\|P_{E_k^\perp}\| = 1$ and $\|P_{E_k^\perp}\|_\HS = \sqrt{\dim(E_k^\perp)} = \sqrt{n-k}$, 
the conclusion of the proposition follows from Theorem~\ref{thm: concentration} 
upon choosing $t = \frac{1}{2} \sqrt{n-k}$ in part (i) and $t = K \sqrt{n-k}$ in part (ii).
\end{proof}

In this paper, we will need to control the distances in the more difficult {\em anisotropic} case, 
where all random vectors are transformed by a fixed linear map $D$. 
In other words, we will be interested in distances of the form $d(DX, E_k)$ 
where $E_k$ is the span of the vectors $DX_1, \ldots, DX_k$.
An ideal estimate should look like 
\begin{equation}         \label{eq: ideal distance}
d(DX, E_k) \asymp \Big( \sum_{i>k} s_i(D)^2 \Big)^{1/2} \quad \text{with high probability},
\end{equation}
where $s_i(D)$ are the singular values of $D$ arranged in the non-increasing order. 
To see why such estimate would make sense, note that in the isotropic case where $D=I_n$ 
the distance is of order $\sqrt{n-k}$, while for $D$ of rank $k$ or lower, the distance is zero.

The following result, based again on Theorem~\ref{thm: concentration},
establishes a somewhat weaker form of \eqref{eq: ideal distance} with exponentially high probability.

\begin{theorem}[Distances between anisotropic random vectors and subspaces]	\label{thm: distance}
  Let $D$ be an $n \times n$ matrix with singular values\footnote{As usual, we arrange the singular values in a
  non-increasing order.} $s_i = s_i(D)$, and define $\bar{S}_m^2 = \sum_{i > m} s_i^2$ for $m \ge 0$.
  Let $1 \le k \le n$. Consider independent random vectors $X, X_1, X_2, \ldots, X_k$
  with independent coordinates satisfying Assumption~\ref{ass}.
  Consider the subspace
  $E_k = \Span(DX_i)_{i=1}^k$.
  Then for every $k/2 \le k_0 < k$ and $k < k_1 \le n$, one has
  \begin{enumerate}[(i)]
    \item $\Pr{d(DX, E_k) \le c \bar{S}_{k_1}} \le 2 \exp(-c (k_1-k))$;
    \item $\Pr{d(DX, E_k) > C M (\bar{S}_{k_0} + \sqrt{k} \, s_{k_0+1})}
      \le 2 k \exp(-c (k-k_0))$.
   \end{enumerate}
  Here $M = C k \sqrt{k_0} / (k-k_0)$ and $C = C(K)$, $c = c(K)>0$.
\end{theorem}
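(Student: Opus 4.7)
The plan is to prove (i) and (ii) separately, in both cases reducing to the sub-gaussian concentration tools of Theorem~\ref{thm: concentration}.

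For part (ii), I would construct an explicit approximation of $DX$ by an element of $E_k$. Decompose $D = D_1 + D_2$, where $D_1 = \sum_{i \le k_0} s_i u_i v_i^*$ extracts the top $k_0$ singular directions and $D_2 = D - D_1$, and let $\Gamma$ be the $n \times k$ matrix with columns $X_1, \ldots, X_k$, so that $E_k = \im(D\Gamma)$. Letting $V_1$ denote the matrix with columns $v_1, \ldots, v_{k_0}$, I would take $b$ to be the minimum-norm solution of $D_1 \Gamma b = D_1 X$, equivalently $b = (V_1^* \Gamma)^\dagger V_1^* X$. The $k_0 \times k$ matrix $V_1^* \Gamma$ has independent sub-gaussian columns; the smallest singular value estimates of Appendix~\ref{app} give $\sigma_{k_0}(V_1^* \Gamma) \gtrsim (k-k_0)/\sqrt{k}$, and Theorem~\ref{thm: concentration}(i) applied to $V_1^*$ gives $\|V_1^* X\|_2 \lesssim \sqrt{k_0}$, each with failure probability $\le 2k\exp(-c(k-k_0))$. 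Together these yield $\|b\|_2 \lesssim \sqrt{k_0 k}/(k-k_0) \lesssim M$.

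Since $D_1(X - \Gamma b) = 0$ by construction, the triangle inequality gives
$$
d(DX, E_k) \le \|D(X - \Gamma b)\|_2 = \|D_2(X - \Gamma b)\|_2 \le \|D_2 X\|_2 + \|D_2 \Gamma\| \, \|b\|_2.
$$
Theorem~\ref{thm: concentration}(i) applied to $D_2$ with deviation $\sqrt{k-k_0}\,s_{k_0+1}$ bounds $\|D_2 X\|_2 \lesssim \bar{S}_{k_0} + \sqrt{k-k_0}\,s_{k_0+1}$, and Theorem~\ref{thm: product norm} applied to $D_2\Gamma$ with $s = 1$, $t = \sqrt{c(k-k_0)/k}$ bounds $\|D_2 \Gamma\| \lesssim \bar{S}_{k_0} + \sqrt{k}\,s_{k_0+1}$, each event having failure probability $\le 2\exp(-c(k-k_0))$. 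A union bound and elementary algebra then produce the claimed inequality (ii).

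For part (i), I would condition on $X_1, \ldots, X_k$ so that $E_k$ is fixed, and apply the small-ball estimate of Theorem~\ref{thm: concentration}(ii) to a truncation of $A := P_{E_k^\perp} D$. The key inputs are the interlacing inequalities $s_{i+k} \le \sigma_i(A) \le s_i$ for $1 \le i \le n - k$. Let $Q$ be the orthogonal projection onto the left singular vectors of $A$ corresponding to indices $k_1 - k + 1, \ldots, n - k$, and put $\widetilde A = QA$. Then $\|\widetilde A X\|_2 \le \|AX\|_2 = d(DX, E_k)$, while interlacing yields $\|\widetilde A\|_\HS^2 \ge \bar{S}_{k_1}^2$ and $\|\widetilde A\| \le s_{k_1-k+1}$. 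Theorem~\ref{thm: concentration}(ii) then gives $d(DX, E_k) \ge \tfrac{1}{6}\bar{S}_{k_1}$ with probability at least $1 - 2\exp(-c\|\widetilde A\|_\HS^2/\|\widetilde A\|^2)$.

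The main obstacle is verifying that the stable rank $\|\widetilde A\|_\HS^2/\|\widetilde A\|^2$ is at least $c(k_1 - k)$. The crude operator-norm bound $\|\widetilde A\| \le s_{k_1-k+1}$ can be too loose when the singular spectrum of $D$ decays rapidly, so the truncation may have to be refined: I would bucket the singular values $\sigma_i(A)$ into dyadic bands, use pigeonhole to locate a band carrying a definite fraction of $\bar{S}_{k_1}^2$, and restrict $Q$ to the left singular vectors of that band. This produces a matrix whose stable rank is controlled by the size of a single dyadic block, at the cost of a suboptimal absolute constant $c$.
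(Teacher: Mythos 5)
Your part~(ii) argument is essentially the paper's: split $D$ into the top-$k_0$ block $D_1$ and the tail $D_2$, solve $D_1 \Gamma b = D_1 X$ via the pseudoinverse of $V_1^*\Gamma$, bound $\|b\|_2$ through $s_{k_0}(V_1^*\Gamma)$ and $\|V_1^*X\|_2$, and control $\|D_2 X\|_2$ and $\|D_2\Gamma\|$. (One small slip: Theorem~\ref{thm: product norm} requires $s,t\ge1$, so you cannot take $t=\sqrt{c(k-k_0)/k}<1$; just take $s=t=1$ and note $\exp(-k)\le\exp(-(k-k_0))$, as the paper does.)

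For part~(i) you have correctly set up $B=P_{E_k^\perp}D$, the small-ball estimate, and you correctly identify the genuine difficulty: controlling the stable rank of a truncation of $B$. But your proposed fix does not work. After zeroing out the top $k_1-k$ singular directions of $B$ (your $\widetilde A=QA$), the remaining singular values $\sigma_i(B)$, $i>k_1-k$, might all sit in a single dyadic band of width $O(1)$. Concretely, if $s_i(D)=2^{-i}$ and $\sigma_i(B)=s_{i+k}(D)=2^{-(i+k)}$, then $\sigma_{k_1-k+1}(B)^2\approx\tfrac34\bar S_{k_1}^2$ already carries most of the tail mass, and any dyadic band you pick contains $O(1)$ singular values. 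Pigeonholing to a band that carries a constant fraction of $\bar S_{k_1}^2$ then yields a matrix of stable rank $O(1)$, and the small-ball bound gives failure probability $\exp(-O(1))$, not the required $\exp(-c(k_1-k))$.

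The fix the paper uses is different and cleaner: do not zero out the top singular values, \emph{cap} them. Define $\bar B$ with the same singular vectors as $B$ and $s_i(\bar B)=\min\{s_i(B),\,s_{k_1-k}(B)\}$. Then $\bar B\bar B^*\preceq BB^*$, so $\|\bar B X\|_2\le\|BX\|_2=d(DX,E_k)$, and by construction $\|\bar B\|=s_{k_1-k}(B)$ while
\begin{equation*}
\|\bar B\|_\HS^2 \;=\; (k_1-k)\,s_{k_1-k}(B)^2 \;+\; \sum_{i>k_1-k} s_i(B)^2
\;\ge\; (k_1-k)\,\|\bar B\|^2 \quad\text{and}\quad \|\bar B\|_\HS^2\ge\bar S_{k_1}^2,
\end{equation*}
the last inequality by the Cauchy interlacing bound $s_i(B)\ge s_{i+k}(D)$. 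Thus the stable rank of $\bar B$ is $\ge k_1-k$ \emph{unconditionally}, regardless of how fast $s_i(D)$ decays. Capping, rather than deleting, the top singular values is precisely what forces the top $k_1-k$ singular values of $\bar B$ to all equal the cap value, which is what makes the stable-rank bound automatic. Theorem~\ref{thm: concentration}(ii) then gives the stated probability bound with no pigeonhole needed.
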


\begin{remark}
  It is important that the probability bounds in Theorem~\ref{thm: distance} 
  are exponential in $k_1-k$ and $k-k_0$.
  We will later choose $k \sim l \sim \log^2 n$ and $k_0 \approx (1-\d)k$, $k_1 \approx (1+\d)k$, where $\d \sim 1/\log n$.
  This will allow us to make the exceptional probabilities Theorem~\ref{thm: distance} smaller
  than, say, $n^{-10}$.
\end{remark}
		
\begin{remark}						\label{rem: distance sharper}
  As will be clear from the proof, one can replace the distance  $d(DX, E_k)$
  in part (ii) of Theorem~\ref{thm: distance} by the following bigger quantity:
  $$
  \inf \Big\{ \Big\| DX - \sum_{i=1}^k a_i D X_i \Big\|_2 :\;  a = (a_1,\ldots,a_k), \, \|a\|_2 \le M \Big\}.
  $$
\end{remark}

\begin{proof}[Proof of Theorem \ref{thm: distance}]
(i)
We can represent the distance as
$$
d(DX,E_k) = \|BX\|_2 \quad \text{where} \quad B = P_{E_k^\perp} D.
$$
We truncate the singular values of $B$  by defining an $n \times n$
matrix $\bar{B}$ with the same left and right singular vectors as $B$, and with
singular values
$$
s_i(\bar{B}) =
\min \{ s_i(B), s_{k_1-k}(B) \}.
$$
Since $s_i(\bar{B}) \le s_i(B)$ for all $i$, we have $\bar{B} \bar{B}^* \preceq B B^*$ in the p.s.d. order,
which implies
\begin{equation}				\label{eq: dist via BX}
\|\bar{B}X\|_2 \le \|BX\|_2 = d(DX,E_k).
\end{equation}

It remains to bound $\|\bar{B}X\|_2$ below. This can be done using Theorem~\ref{thm: concentration}(ii):
\begin{equation}				\label{eq: barB X}
\Pr{ \|\bar{B}X\|_2 < c \|\bar{B}\|_\HS } \le 2 \exp \Big( - \frac{c \|\bar{B}\|_\HS^2}{\|\bar{B}\|^2} \Big).
\end{equation}
For $i > k_1-k$, Cauchy interlacing theorem yields
$s_i(\bar{B}) = s_i(B) \ge s_{i+k}(D)$, thus
\begin{align*}
\|\bar{B}\|_\HS^2
= \sum_{i=1}^n s_i(\bar{B})^2
&\ge (k_1-k) s_{k_1-k}(B)^2 + \sum_{i>k_1-k} s_{i+k}(D)^2 \\
&= (k_1-k) s_{k_1-k}(B)^2 + \bar{S}_{k_1}^2.
\end{align*}
Further, $\|\bar{B}\| = \max_i s_i(\bar{B}) = s_{k_1-k}(B)$.
In particular, $\|\bar{B}\|_\HS^2 \ge \bar{S}_{k_1}^2$ and $\|\bar{B}\|_\HS^2 / \|\bar{B}\|^2 \ge k_1-k$.
Putting this along with \eqref{eq: dist via BX} into \eqref{eq: barB X}, we complete the proof of part (i).

(ii)
We truncate the singular value decomposition $D = \sum_{i=1}^n s_i u_i v_i^*$ by defining
$$
D_0 = \sum_{i=1}^{k_0} s_i u_i v_i^*, \quad
\bar{D} = \sum_{i=k_0+1}^n s_i u_i v_i^*.
$$
By the triangle inequality, we have
\begin{equation}				\label{eq: dist 2 terms}
d(DX, E_k) \le d(D_0 X, E_k) + \|\bar{D} X\|_2.
\end{equation}
We will estimate these two terms separately.

The second term, $\|\bar{D} X\|_2$, can be bounded using sub-gaussian concentration,
Theorem~\ref{thm: concentration}(i).
Since $\|\bar{D}\|=s_{k_0+1}$ and $\|\bar{D}\|_\HS = \bar{S}_{k_0}$, it follows that
$$
\Pr{ \|\bar{D} X\|_2 > C \bar{S}_{k_0} + t } \le 2 \exp \big(-ct^2 / s_{k_0+1}^2),
\quad t \ge 0.
$$
Using this for $t = \sqrt{k} s_{k_0+1}$, we obtain that with probability
at least $1 - 2 \exp(-ck)$,
\begin{equation}				\label{eq: Dbar X small}
\|\bar{D} X\|_2 \le C(\bar{S}_{k_0} + \sqrt{k} \, s_{k_0+1}).
\end{equation}

Next, we estimate the first term in \eqref{eq: dist 2 terms}, $d(D_0 X, E_k)$.
Our immediate goal is to represent $D_0 X$ as a linear combination
\begin{equation}				\label{eq: D0X sum}
D_0 X = \sum_{i=1}^k a_i D_0 X_i
\end{equation}
with some control of the norm of the coefficient vector
$a=(a_1,\ldots,a_k)$.
To this end, let us consider the singular value decomposition
$$
D_0 = U_0 \Sigma_0 V_0^*; \quad \text{denote } P_0 = V_0^*.
$$
Thus $P_0$ is a $k_0 \times n$ matrix satisfying $P_0 P_0^* = I_{k_0}$.
Let $G$ denote the $n \times k$ with columns $X_1, \ldots, X_k$.

We apply Theorem~\ref{thm: rectangular} for the $k_0 \times k$ matrix $P_0 G$.
It states that with probability at least $1 - 2k \exp(-c(k-k_0))$, we have
\begin{equation}				\label{eq: sk0}
s_{k_0} (P_0 G) \ge c \, \frac{k-k_0}{k} =: \s_0.
\end{equation}
Using Lemma~\ref{lem: smallest sv}(ii) we can find a coefficient vector $a=(a_1,\ldots,a_k)$ such that
\begin{gather}
P_0 X = P_0 G a = \sum_{i=1}^k a_i P_0 X_i,  			\label{eq: P0X sum} \\
\quad \|a\|_2 \le s_{k_0} (P_0 G)^{-1} \|P_0 X\|_2 \le \s_0^{-1} \|P_0 X\|_2.   \label{eq: a small prelim}
\end{gather}
Multiplying both sides of \eqref{eq: P0X sum} by $U_0 \Sigma_0$
and recalling that $D_0 = U_0 \Sigma_0 V_0^* = U_0 \Sigma_0 P_0$,
we obtain the desired identity \eqref{eq: D0X sum}.

To finalize estimating $\|a\|_2$ in \eqref{eq: a small prelim}, recall that
$\|P_0\|_\HS^2 = \tr(P_0 P_0^*) = \tr(I_{k_0}) = k_0$
and $\|P_0\| = 1$. Then Theorem~\ref{thm: concentration}(i) yields
that with probability at least $1-2\exp(-c k_0)$, one has $\|P_0 X\|_2 \le C \sqrt{k_0}$.
Intersecting with the event \eqref{eq: a small prelim}, we conclude that with probability at least
$1 - 4k \exp(-c(k-k_0))$, one has
\begin{equation}				\label{eq: a small}
\|a\|_2 \le C \s_0^{-1} \sqrt{k_0} =: M.
\end{equation}

Now we have representation \eqref{eq: D0X sum} with a good control of $\|a\|_2$.
Then we can estimate the distance as follows:
\begin{align*}
d(D_0X,E_k)
&= \inf_{z \in E_k} \|D_0X - z\|_2
\le \Big\| \sum_{i=1}^k a_i D_0X_i - \sum_{i=1}^k a_i DX_i \Big\|_2 \\
&= \Big\| \sum_{i=1}^k a_i \bar{D} X_i \Big\|_2
\le \|a\|_2 \|\bar{D} G\|.
\end{align*}
(Recall that $G$ denotes the $n \times k$ with matrix columns $X_1, \ldots, X_k$.)
Applying Theorem~\ref{thm: product norm}, we have with probability at least
$1-2\exp(-k)$ that
$$
\|\bar{D} G\| \le C(\|\bar{D}\|_\HS + \sqrt{k} \|\bar{D}\|) = C(\bar{S}_{k_0} + \sqrt{k} \, s_{k_0+1}).
$$
Intersecting this with the event \eqref{eq: a small}, we obtain with probability
at least $1 - 6k \exp(-c(k-k_0))$ that
$$
d(D_0X,E_k) \le C M (\bar{S}_{k_0} + \sqrt{k} \, s_{k_0+1}).
$$

Finally, we combine this with the event \eqref{eq: Dbar X small} and put into
the estimate \eqref{eq: dist 2 terms}. It follows that with probability
at least $1 - 8k \exp(-c(k-k_0))$, one has
$$
d(DX, E_k) \le C(M+1) (\bar{S}_{k_0} + \sqrt{k} s_{k_0+1}).
$$
Due to our choice of $M$ (in \eqref{eq: a small} and \eqref{eq: sk0}),
the theorem is proved.\footnote{The factor $8$ in the probability estimate
  can be reduced to $2$ by adjusting $c$.
  We will use the same step in later arguments.}
\end{proof}

\section{Construction of a test projection}				\label{s: test projection}

We are now ready to construct a test projection $P$,
which will be used later in Proposition~\ref{prop: deloc from P}.

\begin{theorem}[Test projection]					\label{thm: test projection}
  Let $1 \le l \le n/4$ and $z \in \C$, $|z| \le K_1 \sqrt{n}$ for some absolute constant $K_1$.
  Consider a random matrix $G$ as in Theorem~\ref{thm: main approximate},
  and let $A = G - z I_n$. Let $A_j$ denote the columns of $A$.
  Then one can find an integer $l' \in [l/2, l]$ and an $l' \times n$ matrix $P$
  in such a way that $l'$ and $P$ are determined by $l$, $n$ and $\{A_j\}_{j>l}$, and
  so that the following properties hold:
  \begin{enumerate}[(i)]
    \item $P P^* = I_{l'}$;
    \item $\ker P \supseteq \{A_j\}_{j>l}$;
    \item with probability at least $1 - 2 n^2 \exp(-cl/\log n)$, one has
      \begin{align*}
      \|P e_i\|_2 &\le C \sqrt{l} \, \log^{3/2} n \cdot \|P e_j\|_2 \quad \text{for } 1 \le i,j \le l' ;  \\
      \|P e_i\|_2 &= 0  \quad \text{for } l' < i \le l.
      \end{align*}
  \end{enumerate}
  Here $C = C(K,K_1)$, $c = c(K,K_1)>0$.
\end{theorem}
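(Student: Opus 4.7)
The overall plan is to exploit the block decomposition of $A = G - z I_n$ relative to $\C^n = F_1 \oplus F_2$, with $F_1 = \Span(e_i)_{i \le l}$ and $F_2 = \Span(e_j)_{j > l}$. After permuting coordinates, one may assume the columns to be killed are $A_{l+1}, \ldots, A_n$, assembled into the block matrix
\[
B = \begin{pmatrix} G_{12} \\ G_{22} - zI_{n-l} \end{pmatrix}.
\]
The first move is to invoke the appendix smallest-singular-value estimates to ensure (with the required probability) that $G_{22} - zI_{n-l}$ is invertible, then set $T := (G_{22} - zI_{n-l})^{-1}$ and $D := G_{12} T$ (both determined by $\{A_j\}_{j>l}$), and verify the parametrization $H^\perp = \{(x, -D^* x) : x \in \C^l\}$, where $H := \Span\{A_j\}_{j > l}$. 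For $S \subseteq [l]$ of size $l'$, let $E_S := \{(x, -D^* x) : \mathrm{supp}(x) \subseteq S\}$ and take $P$ to be any $l' \times n$ matrix whose rows form an orthonormal basis of $E_S$; this choice yields properties (i), (ii), and the $\|Pe_i\|_2 = 0$ part of (iii) for $i \in [l]\setminus S$ automatically, since $e_i \perp E_S$.

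The central identity is the relation of $\|Pe_i\|_2$ to an anisotropic distance. Solving the normal equations for the projection of $e_i$ onto $E_S$ gives, for $i \in S$,
\[
\|Pe_i\|_2^2 = \bigl[(I_{l'} + D_S D_S^*)^{-1}\bigr]_{ii},
\]
where $D_S$ consists of the rows of $D$ indexed by $S$; applying the standard identity $(YY^*)^{-1}_{ii} = d(Y_i, H_i)^{-2}$ to $Y = [I_{l'} \mid D_S]$ converts this into
\[
\|Pe_i\|_2 = d(Y_i, H_i)^{-1}, \quad Y_i = (\tilde e_i, X_i T), \ H_i = \Span\{Y_k : k \in S,\ k \neq i\},
\]
where $X_i$ is the $i$-th row of $G_{12}$. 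Since $X_i$ has independent sub-gaussian entries and is independent of $T$ (which depends only on $G_{22}$), these are exactly the anisotropic random vectors and subspaces treated by Theorem~\ref{thm: distance}.

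The remaining task is to find $S$ of size $l' \ge l/2$ so that $d(Y_i, H_i)$ is comparable across $i \in S$ up to a factor $C\sqrt{l}\log^{3/2}n$. The trivial upper bound $d(Y_i, H_i)^2 \le 1 + \|X_i T\|_2^2$ together with Theorem~\ref{thm: concentration}(i) (applied conditionally on $T$) shows $\|X_i T\|_2 \approx \|T\|_\HS$ for all $i \le l$ simultaneously. A matching lower bound follows from projecting onto $F_2$,
\[
d(Y_i, H_i)^2 \ge 1 + d\bigl(X_i T, \Span\{X_k T : k \in S\setminus\{i\}\}\bigr)^2,
\]
which Theorem~\ref{thm: distance}(i) controls from below by $\bar S_{k_1}(T)^2$ for $k_1$ slightly larger than $l'$. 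The main obstacle is that the ratio $\|T\|_\HS / \bar S_{k_1}(T)$ could \emph{a priori} be large when $T$ has a few anomalously big singular values, i.e.~when $G_{22} - zI_{n-l}$ is near-singular in several directions. The appendix estimates and Theorem~\ref{thm: distance}(ii) are used jointly to bound these top singular values, producing the factor $M \asymp \sqrt{l}\log n$ from choosing $k_1 - k_0 \sim l/\log n$; the $\sqrt{l}$ here already accounts for the bulk of the claimed factor and reflects the heuristic spacing $s_j(T) \sim \sqrt{n-l}/j$. The remaining logarithmic losses, together with a union bound over $i$ that drops the outlier indices so that $|S|\ge l/2$, are absorbed into the $\log^{3/2} n$ factor, with total failure probability summing to $O(n^2)\exp(-cl/\log n)$.
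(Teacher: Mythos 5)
Your reduction to anisotropic distances is correct and in fact mirrors the paper's own proof quite closely. Parametrizing $H^\perp = \{(x, -D^*x)\}$ with $D = G_{12}(G_{22}-zI_{n-l})^{-1}$ and then using the Gram-matrix/negative-second-moment identity to write $\|Pe_i\|_2 = d(Y_i,H_i)^{-1}$ with $Y_i = (\tilde e_i, X_iT)$ is exactly the content of the paper's Proposition~\ref{prop: Pei via distances} together with the kernel calculation in \S\ref{s: ker Q}; you have just done the orthogonalization bookkeeping more directly, which is a cosmetic improvement. The ingredients you invoke for the distance bounds --- Theorem~\ref{thm: distance} applied conditionally on $G_{22}$, the appendix invertibility estimates, the magnitude $M\asymp\sqrt l\log n$ from taking $k-k_0\sim l/\log n$ --- are the right ones.

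The genuine gap is that you never explain how $l'$ (equivalently the window $[k_0,k_1]$) is selected, and the substitutes you offer do not work. Your trivial upper bound $d(Y_i,H_i)\le\sqrt{1+\|X_iT\|_2^2}\approx\sqrt{1+\|T\|_\HS^2}$ paired with the lower bound $c\,\bar S_{k_1}(T)$ produces a ratio $\|T\|_\HS/\bar S_{k_1}(T)$ that can be polynomial in $n$ (think of $T$ with a single singular value of order $\sqrt n$ coming from near-singularity of $G_{22}-zI$). You correctly flag this obstacle and point to Theorem~\ref{thm: distance}(ii), but that theorem gives the upper bound $CM(\bar S_{k_0}+\sqrt k\,s_{k_0+1})$, which must then be compared to the lower bound $c\,\bar S_{k_1}$; for this comparison to lose only a constant one needs $\bar S_{k_0}/\bar S_{k_1}=O(1)$ and $s_{k_0+1}/s_{k_1}=O(1)$, and there is no reason for that to hold for an arbitrary $l'$. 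The paper's Lemma~\ref{lem: ratios} is exactly the missing step: given that the appendix forces $s_{l/2}(\bar A^{-1})/s_l(\bar A^{-1})\le Cn$, a pigeonhole over $O(\log n)$ subintervals of $[l/2,l]$ produces a window $l'$ of width $\sim l/\log n$ on which the singular values of $\bar A^{-1}$ change by at most a constant factor, and this is what makes $\bar S_{k_0}\asymp\bar S_{k_1}$. Your two proposed replacements fail: the ``heuristic spacing $s_j(T)\sim\sqrt{n-l}/j$'' is not a theorem and is false in general, and ``dropping outlier indices $i$'' cannot help because all $d_i$ see the same operator $T$, so the problem is a global property of the spectrum of $T$, not a pathology of a few indices $i$. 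Once Lemma~\ref{lem: ratios} is in place, the index set is simply $S=\{1,\dots,l'\}$ and nothing needs to be dropped; the freedom $l'\ge l/2$ in the statement is there precisely to allow the pigeonhole to land anywhere in $[l/2,l]$, not to accommodate discarded outliers.
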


In the rest of this section we prove Theorem~\ref{thm: test projection}.

\subsection{Selection of the spectral window $l'$}

Consider the $n \times n$ random matrix $A$ with columns $A_j$.
Let $\bar{A}$ denote the $(n-l) \times (n-l)$ minor of $A$ obtained by removing
the first $l$ rows and columns.
By known invertibility results for random matrices, we will see that
most singular values of $\bar{A}$, and thus also of $\bar{A}^{-1}$,
are within a factor $n^{O(1)}$ from each other.
Then we will find a somewhat smaller interval (a ``spectral window'')
in which the singular values of $\bar{A}^{-1}$ are within constant factor from each other.
This is a consequence of the following elementary lemma.

\begin{lemma}[Improving the regularity of decay]									\label{lem: ratios}
  Let $s_1 \ge s_2 \ge \cdots \ge s_n$, and define
  $\bar{S}_k^2 = \sum_{j > k} s_j^2$ for $k \ge 0$.
  Assume that for some $l \le n$ and $R \ge 1$, one has
  \begin{equation}				\label{eq: ratio assumption}
  \frac{s_{l/2}}{s_l} \le R.
  \end{equation}
  Set $\d = c / \log R$.
  Then there exists $l' \in [l/2, l]$ such that
  \begin{equation}				\label{eq: ratio conclusion}
  \frac{s_{(1-\d)\,l'}^2}{s_{(1+\d)\,l'}^2} \le 2
  \quad \text{and } \quad
  \frac{\bar{S}_{(1-\d)\,l'}^2}{\bar{S}_{(1+\d)\,l'}^2} \le 5.
  \end{equation}
\end{lemma}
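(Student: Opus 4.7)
The plan is to run a pigeonhole argument on a geometric grid of candidate centers $l_0 < l_1 < \cdots < l_{N-1}$ in $[l/2, l]$ whose successive $(1\pm\delta)$-windows abut exactly. Set $l_0 = (l/2)/(1-\delta)$ and $l_{i+1} = l_i \cdot (1+\delta)/(1-\delta)$, taking $N$ maximal with $(1+\delta)l_{N-1} \le l$. The identity $(1+\delta)l_i = (1-\delta)l_{i+1}$ makes the intervals $[(1-\delta)l_i, (1+\delta)l_i]$ tile $[l/2, (1+\delta)l_{N-1}] \subseteq [l/2, l]$, and a direct computation gives $N \ge c_1/\delta = c_1 (\log R)/c$ for some absolute $c_1>0$.

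For the first inequality, straight telescoping yields
\[
\prod_{i=0}^{N-1} \frac{s_{(1-\delta)l_i}^2}{s_{(1+\delta)l_i}^2} = \frac{s_{l/2}^2}{s_{(1+\delta)l_{N-1}}^2} \le \frac{s_{l/2}^2}{s_l^2} \le R^2,
\]
so at most $\log_2 R^2 = 2\log_2 R$ indices $i$ can have $s$-ratio larger than $2$; call such indices \emph{s-bad}.

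For the second inequality the analogous telescoping collapses to $\bar{S}_{l/2}^2/\bar{S}_l^2$, which cannot be controlled by $R$ alone, since the tail $\bar{S}_l^2$ can be arbitrarily small when the sequence decays sharply past index $l$. Instead I would tie the $\bar{S}$-inequality at $l_i$ to the $s$-inequalities at \emph{both} $l_i$ and $l_{i+1}$. Assuming neither $i$ nor $i+1$ is s-bad, one has
\[
\bar{S}_{(1-\delta)l_i}^2 - \bar{S}_{(1+\delta)l_i}^2 = \sum_{(1-\delta)l_i < j \le (1+\delta)l_i} s_j^2 \le 2\delta l_i\, s_{(1-\delta)l_i}^2 \le 4\delta l_i\, s_{(1+\delta)l_i}^2,
\]
while, using $(1+\delta)l_i = (1-\delta)l_{i+1}$ together with s-goodness at $l_{i+1}$,
\[
\bar{S}_{(1+\delta)l_i}^2 \ge \sum_{(1+\delta)l_i < j \le (1+\delta)l_{i+1}} s_j^2 \ge 2\delta l_i\, s_{(1+\delta)l_{i+1}}^2 \ge \delta l_i\, s_{(1+\delta)l_i}^2.
\]
Dividing gives $\bar{S}_{(1-\delta)l_i}^2/\bar{S}_{(1+\delta)l_i}^2 \le 1 + 4 = 5$.

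An index $l' = l_i$ fails to meet both conclusions only if $i$ is s-bad, $i+1$ is s-bad, or $i = N-1$; the total number of such bad indices is at most $4\log_2 R + 1$. Choosing $c$ small enough that $N > 4\log_2 R + 1$ leaves at least one good $l_i \in [l/2, l]$, which is the desired $l'$. The hard part is the second inequality: since the global telescoped quantity $\bar{S}_{l/2}^2/\bar{S}_l^2$ has no bound depending only on $R$, one must abandon the naive pigeonhole and instead use the local two-step comparison above, which exploits s-goodness at the neighboring center to supply a lower bound for $\bar{S}_{(1+\delta)l_i}^2$.
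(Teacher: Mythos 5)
Your proof is correct and reaches the same conclusion by a genuinely different route. The paper partitions $[l/2,l]$ into $1/(8\delta)$ intervals of length $4\delta l$, pigeonholes to get one interval $[l'-2\delta l,\,l'+2\delta l]$ over which $s^2$ drops by at most $2$, and then derives \emph{both} bounds locally from that single chosen window: the $s$-ratio over the half-window $[(1-\delta)l',(1+\delta)l']$ comes for free by monotonicity, while the $\bar S$-ratio is obtained exactly as in your local computation --- the removed mass $\sum_{l'-\delta l<i\le l'+\delta l}s_i^2$ is bounded by $2\delta l\,s_{l'-2\delta l}^2$ and the tail $\bar S_{l'+\delta l}^2$ is bounded below by the next half-window $\sum_{l'+\delta l<i\le l'+2\delta l}s_i^2\ge \tfrac12\delta l\,s_{l'-2\delta l}^2$. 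Your version replaces the coarse sliding window by an abutting geometric grid, gets the $s$-bound by telescoping, and then needs $s$-goodness at the \emph{two} adjacent centers $l_i$ and $l_{i+1}$ to produce the lower bound on $\bar S_{(1+\delta)l_i}^2$. You also correctly identify the crucial obstruction that a naive telescoping pigeonhole cannot control $\bar S_{l/2}^2/\bar S_l^2$, which is exactly why the $\bar S$-bound must be \emph{derived} from the $s$-bound rather than pigeonholed independently. The trade-off: the paper's doubled window avoids the adjacency bookkeeping (one good interval suffices, rather than two consecutive good grid points), at the cost of a slightly less explicit count; your version is more modular but requires the extra argument that at most $4\log_2 R+1$ grid indices are bad. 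One small implicit point shared by both arguments: the lower bound on the tail uses terms at indices up to roughly $(1+2\delta)l'$, so one needs some headroom beyond $l$; this is harmless since the lemma is applied with $l\le n/4$.
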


\begin{proof}
Let us divide the interval $[l/2,l]$ into $1/(8\d)$ intervals of length $4\d l$.
Then for at least one of these intervals, the sequence $s_i^2$
decreases over it by a factor at most $2$. Indeed, if this were not true, the sequence would decrease
by a factor at least $2^{1/(8\d)} > R$ over $[l/2,l]$,
which would contradict the assumption \eqref{eq: ratio assumption}.
Set $l'$ to be the midpoint of the interval we just found, thus
\begin{equation}				\label{eq: ratio larger interval}
\frac{s_{l'-2\d l}^2}{s_{l'+2\d l}^2} \le 2.
\end{equation}
By monotonicity of $s_i^2$, this implies the first part of the conclusion \eqref{eq: ratio conclusion}.
To see this, note that since $l' \le l$, we have $l'-2\d l \le (1-\d)l' \le (1+\d)l' \le l'+2\d l$.

To deduce the second part of \eqref{eq: ratio conclusion}, note that by
monotonicity we have
\begin{align}
\bar{S}_{l'-\d l}^2
  &= \sum_{l'-\d l < i \le l'+\d l} s_i^2 + \bar{S}_{l'+\d l}^2
  \le 2\d l \cdot s_{l'-2\d l}^2 + \bar{S}_{l'+\d l}^2, 		\label{eq: S upper}\\
\bar{S}_{l'+\d l}^2
  &\ge \sum_{l'+\d l < i \le l'+2\d l} s_i^2
  \ge \d l \cdot s_{l'+2\d l}^2
  \ge \frac{1}{2} \d l \cdot s_{l'-2\d l}^2,					\label{eq: S lower}
\end{align}
where the very last inequality follows from \eqref{eq: ratio larger interval}.
Estimates \eqref{eq: S upper} and \eqref{eq: S lower} together imply that
$\bar{S}_{l'-\d l}^2 \le 5 \bar{S}_{l'+\d l}^2$.
Like in the first part, we finish by monotonicity.
\end{proof}

We shall apply Lemma~\ref{lem: ratios} to the singular values of $\bar{A}^{-1}$, i.e. for
$$
s_j = s_j(\bar{A}^{-1}), \quad \bar{S}_k^2 = \sum_{j > k} s_j^2.
$$
To verify the assumptions of the lemma, we can use known estimates
of the extreme singular values of random matrices.
By Theorem~\ref{thm: product norm} (see Remark~\ref{rem: product norm special cases}),
with probability at least $1-\exp(-n)$, we have $\|G\| \le C \sqrt{n}$, and thus
$$
s_l^{-1} \le s_n^{-1}
= s_1(\bar{A})
= \|\bar{A}\|
\le \|A\|
= \|G - z I_n\|
\le \|G\| + |z|
\le (C+K_1) \sqrt{n}.
$$
Further, by Theorem~\ref{thm: intermediate}, with probability
at least $1 - 2 l \exp(-c(n-2l))$, one has
$$
s_{l/2}^{-1} = s_{n-l/2+1}(\bar{A}) \ge \frac{c}{\sqrt{n}}.
$$
(Here we used that $l \le n/4$.)
Summarizing, with probability at least $1-2n\exp(-cl)$,
\begin{equation}				\label{eq: Abar conditioned}
\frac{c_1}{\sqrt{n}} \le s_l \le s_{l/2} \le C_1 \sqrt{n}.
\end{equation}

Let us condition on $\bar{A}$ for which event \eqref{eq: Abar conditioned} holds.
We apply Lemma~\ref{lem: ratios} with $R  = (C_1/c_1) n$ and thus for
\begin{equation}				\label{eq: delta}
\d = c/\log n.
\end{equation}
We find $l' \in [l/2, l]$ such that \eqref{eq: ratio conclusion} holds.
Note that the value of $l'$ depends only on the minor $\bar{A}$, thus
only on $\{A_j\}_{j > l}$, as claimed in Theorem~\ref{thm: test projection}.
Since we have conditioned on $\bar{A}$, the value of the ``spectral window''
$l'$ is now fixed.

\subsection{Construction of $P$}						

We construct $P$ in two steps. First we define a matrix $Q$ of the same dimensions
that satisfies (ii) of the Theorem, and then obtain $P$ by orthogonalization of the rows of $Q$.

Thus we shall look for an $l' \times n$ matrix $Q$ that consists of three blocks of columns:
$$
Q =
\left[ \begin{array}{cccc|ccc|c}
  q_{11} & 0 & \cdots & 0 & 0 & \cdots & 0 & \bar{q}_1^\tran \\
  0 & q_{22} & \cdots & 0 & 0 & \cdots & 0 & \bar{q}_2^\tran \\
  \vdots & \vdots & \ddots & \vdots & \vdots & \vdots & \ddots & \vdots \\
  0 & 0 & \cdots & q_{l' l'} & 0 & \cdots & 0 & \bar{q}_{l'}^\tran
\end{array} \right],
\quad q_{ii} \in \C, \; \bar{q}_i \in \C^{n-l}.
$$
We require that $Q$ satisfy condition (ii) in Theorem~\ref{thm: test projection}, i.e. that
\begin{equation}							\label{eq: ker Q}
\ker Q \supseteq \{A_j\}_{j>l}.
\end{equation}
We explore this requirement in Section~\ref{s: ker Q}; for now let us
assume that it holds.

Choose $P$ to be an $l' \times n$ matrix that satisfies the following two defining
properties:
\begin{enumerate}[(a)]
  \item $P$ has orthonormal rows;
  \item the span of the rows of $P$ is the same as the span of the rows of $Q$.
\end{enumerate}
One can construct $P$ by Gram-Schmidt orhtogonalization of the rows of $Q$.

Note that the construction of $P$ along with \eqref{eq: ker Q}
implies (i) and (ii) of Theorem~\ref{thm: test projection}.
It remains to estimate $\|Pe_j\|_2$ thereby proving (iii) of Theorem~\ref{thm: test projection}.

\subsection{Reducing $\|Pe_i\|_2$ to distances between random vectors and subspaces}

\begin{proposition}[Norms of columns of $P$ via distances]				\label{prop: Pei via distances}
  Let $q_i$ denote the rows of $Q$ and $q_{ij}$ denote the entries of $Q$. Then:
  \begin{enumerate}[(i)]
    \item The values of $\|Pe_i\|_2$, $i \le n$, are determined by $Q$, and they do not depend
      on a particular choice of $P$ satisfying its defining properties (a), (b).
    \item For every $i \le l'$,
      \begin{equation}							\label{eq: Pei}
      \|Pe_i\|_2 = \frac{|q_{ii}|}{d(q_i, E_i)}, \quad \text{where } E_i = \Span(q_j)_{j \le l', \, j \ne i}.
      \end{equation}
    \item For every $l' < i \le l$, $\|Pe_i\|_2 = 0$.
  \end{enumerate}
\end{proposition}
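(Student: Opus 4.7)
My plan is to reduce each of the three claims to a single identity: that $\|Pe_i\|_2$ is the length of the orthogonal projection of $e_i$ onto the common row space $F := \mathrm{row}(Q) = \mathrm{row}(P) \subseteq \C^n$, a subspace that by property~(b) depends only on $Q$. To see this I would use property~(a), $PP^* = I_{l'}$, which forces $P^*P$ to be the orthogonal projector onto $\mathrm{row}(P) = F$; then
\[
\|Pe_i\|_2^2 = e_i^* P^* P e_i = \|P_F e_i\|_2^2,
\]
and since the right-hand side is a function of $F$ alone, part (i) follows immediately.

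Part (iii) I would read directly off the block form of $Q$: for $l' < i \le l$ the entire $i$-th column of $Q$ is zero, so every row $q_k$ has vanishing $i$-th coordinate, meaning $e_i \perp F$ and therefore $P_F e_i = 0$.

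For part (ii) I would compute $P_F e_i$ explicitly for $i \le l'$. Thanks to the full-rank convention adopted in Section~\ref{s: preliminaries}, the rows $q_1,\dots,q_{l'}$ are linearly independent, so writing $P_F e_i = \sum_{k=1}^{l'} c_k q_k$ and imposing orthogonality of $e_i - P_F e_i$ against each $q_k$ produces the normal equations $Mc = b$, where $M_{kj} = \< q_j, q_k\>$ is the Gram matrix of the rows and $b_k = \< e_i, q_k\>$. The block structure of $Q$ is decisive here: for $i \le l'$ the $i$-th coordinate of row $q_k$ equals $q_{ii}\delta_{ki}$, so $b = \overline{q_{ii}}\, e_i$ and hence $c = \overline{q_{ii}}\, M^{-1} e_i$. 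This gives
\[
\|P_F e_i\|_2^2 = \overline{c}^{\,\tran} M c = \overline{c}^{\,\tran} b = |q_{ii}|^2 (M^{-1})_{ii}.
\]

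To finish I would invoke the classical Gram-matrix identity $(M^{-1})_{ii} = 1/d(q_i, E_i)^2$, which is immediate from the cofactor formula $(M^{-1})_{ii} = \det M^{(i)}/\det M$ combined with the volume recursion $\det M = d(q_i, E_i)^2 \det M^{(i)}$, where $M^{(i)}$ denotes the Gram matrix of $(q_k)_{k \le l',\, k \ne i}$. Assembling these pieces yields $\|Pe_i\|_2 = |q_{ii}|/d(q_i, E_i)$, as claimed. No step presents a real obstacle; the mildly delicate point is the bookkeeping of complex conjugates, which is precisely why the final formula features $|q_{ii}|$ rather than $q_{ii}$.
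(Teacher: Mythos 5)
Your proof is correct, and it takes a genuinely different route from the paper's. The paper argues part (i) by observing that any two admissible $P, P'$ differ by a left unitary factor (so $\|P'e_i\|_2 = \|Pe_i\|_2$), and proves part (ii) by running Gram--Schmidt on the rows of $Q$ in the specific order $q_{l'}, \ldots, q_1$ and reading the formula off the resulting coordinates: with that ordering, $p_1 = (q_1 - P_{E_1}q_1)/d(q_1,E_1)$ has first coordinate $q_{11}/d(q_1,E_1)$ while $p_2,\dots,p_{l'}$ all lie in $E_1$ and hence have zero first coordinate, so $\|Pe_1\|_2 = |p_{11}|$. You instead identify $P^*P$ as the orthogonal projector onto the common row space $F$, which disposes of (i) in a single line, and compute $\|P_F e_i\|_2^2$ through the normal equations and the classical Gram identity $(M^{-1})_{ii} = 1/d(q_i,E_i)^2$.

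Both approaches are valid and roughly of equal weight. Yours is cleaner for (i), since the intrinsic identity $\|Pe_i\|_2^2 = e_i^*P^*Pe_i = \|P_{\mathrm{range}(P^*)}e_i\|_2^2$ makes the independence of the choice of $P$ immediate; the paper's Gram--Schmidt route, on the other hand, makes the role of the block structure of $Q$ (the diagonal first block, and the zero middle block used for part (iii)) slightly more geometrically transparent, and avoids invoking the Gram determinant identity. One small caveat worth noting in your writeup: over $\C$, $\mathrm{range}(P^*) = \Span\{\overline{p_k}\}$ is the \emph{conjugate} of the row space $\Span\{p_k\}$. This is harmless here --- $e_i$ is a real vector, so $\|P_F e_i\|_2 = \|P_{\overline{F}} e_i\|_2$, and likewise $d(q_i,E_i) = d(\overline{q_i},\overline{E_i})$ since the Gram matrix of $\{\overline{q_k}\}$ is $M^\tran$ --- but since you flagged the complex bookkeeping as the delicate point, this is precisely the spot to be explicit about it.
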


\begin{proof}
(i) Any $P, P'$ that satisfy the defining properties (a), (b) must satisfy
$P' = UP$ for some $l' \times l'$ unitary matrix $U$. It follows that $\|P'e_i\|_2 = \|Pe_i\|_2$ for all $i$.

\medskip

(ii)
Let us assume that $i=1$; the argument for general $i$ is similar.
By part (i), we can construct the rows of $P$ by performing Gram-Schmidt procedure on the
rows of $Q$ in any order. We choose the following order: $q_{l'}, q_{l'-1},\ldots,q_1$,
and thus construct the rows $p_{l'}, p_{l'-1},\ldots,p_1$ of $P$.
This yields
\begin{align}
p_1 &= \frac{\tilde{p}_1}{\|\tilde{p}_1\|_2}, \quad \text{where } \tilde{p}_1 = q_1 - P_{E_1} q_1   \label{eq: p1} \\
p_j &\in \Span(q_k)_{k \ge j}, \quad j = 1,\ldots, l'			\label{eq: pj}
\end{align}
Recall that we would like to estimate
\begin{equation}							\label{eq: Pe1}
\|P e_1\|_2^2 = |p_{11}|^2 + |p_{21}|^2 \cdots + |p_{l' 1}|^2
\end{equation}
where $p_{ij}$ denote the entries of $P$.

First observe that all vectors in $E_1 = \Span(q_k)_{k \ge 2}$ have their
first coordinate equal zero, because the same holds for the vectors
$q_k$, $k \ge 2$, by the construction of $Q$.
Since $P_{E_1} q_1 \in E_1$, this implies by \eqref{eq: p1} that
$\tilde{p}_{11} = q_{11}$.
Further, again by \eqref{eq: p1} we have $\|\tilde{p}_1\|_2 = d(q_1,E_1)$. Thus
$$
p_{11} = \frac{\tilde{p}_{11}}{\|\tilde{p}_1\|_2} = \frac{q_{11}}{d(q_1,E_1)}.
$$

Next, for each $2 \le j \le l'$, \eqref{eq: pj} implies that
$p_j \in \Span(q_k)_{k \ge 2} = E_1$,
and thus the first coordinate of $p_j$ equal zero.
Using this in \eqref{eq: Pe1}, we conclude that
$$
\|Pe_1\|_2 = |p_{11}| =  \frac{|q_{11}|}{d(q_1,E_1)}.
$$
This completes the proof of (ii).

\medskip

(iii) is trivial since $Q e_i = 0$ for all $l' < i \le l$ by the construction of $Q$,
while the rows of $P$ are the linear combination of the rows of $Q$.
\end{proof}

\subsection{The kernel requirement \eqref{eq: ker Q}}			\label{s: ker Q}

In order to estimate the distances $d(q_i, E_i)$ defined by the rows of $Q$,
let us explore the condition \eqref{eq: ker Q} for $Q$.
To express this condition algebraically, let us consider the $n \times (n-l)$ matrix $A^{(l)}$
obtained by removing the first $l$ columns from $A$.
Then \eqref{eq: ker Q} can be written as
\begin{equation}							\label{eq: QAl}
Q A^{(l)} = 0.
\end{equation}
Let us denote the first $l$ rows of $A^{(l)}$ by $B_i^\tran$, thus
\begin{equation}							\label{eq: A without l columns}
A^{(l)} =
\newcommand*{\hsp}{\hspace{1cm}}
\left[ \begin{array}{c}
  \hsp B_1^\tran \hsp \\
  \hsp \vdots \hsp \\
  \hsp B_l^\tran \hsp \\
  \cline{1-1}
  \medskip \\
  \hsp \bar{A} \hsp
  \bigskip
\end{array} \right],
\quad B_i \in \C^{n-l}.
\end{equation}
Then \eqref{eq: QAl} can be written as
$$
q_{ii} B_i^\tran + \bar{q}_i^\tran \bar{A} = 0, \quad i \le l'.
$$
Without loss of generality, we can assume that the matrix $\bar{A}$ is almost surely invertible
(see Section \ref{s: preliminaries} for a perturbation argument achieving this).
Multiplying both sides of the previous equations by $\bar{A}^{-1}$,
we further rewrite them as
\begin{equation}							\label{eq: bar qi}
\bar{q}_i = -q_{ii} D B_i, \quad i \le l',
\qquad \text{where } D := (\bar{A}^{-1})^\tran.
\end{equation}
Thus we can choose $Q$ to satisfy the requirement \eqref{eq: ker Q} by
choosing $q_{ii} > 0$ arbitrarily and
defining $\bar{q}_i$ as in \eqref{eq: bar qi}.

\subsection{Estimating the distances, and completion of proof of Theorem~\ref{thm: test projection}}  \label{completion}

We shall now estimate $\|Pe_i\|_2$, $1 \le i \le l'$, using identities \eqref{eq: Pei} and \eqref{eq: bar qi}.
By the construction of $Q$ and \eqref{eq: bar qi} we have
$$
q_i = (0 \cdots q_{ii} \cdots 0 \; \bar{q}_i^\tran)
= - q_{ii} r_i,
\quad \text{where} \quad
r_i = (0 \cdots -1 \cdots 0 \; (DB_i)^\tran).
$$
Let us estimate $\|Pe_1\|_2$; the argument for general $\|Pe_i\|_2$ is similar.
By \eqref{eq: Pei},
\begin{equation}							\label{eq: d1}
\|P e_1\|_2 = \frac{|q_{11}|}{d(q_{11} r_1, \Span(q_{jj} r_j)_{2 \le j \le l'})}
= \frac{1}{d(r_1, \Span(r_j)_{2 \le j \le l'})} =: \frac{1}{d_1}.
\end{equation}
We will use Theorem~\ref{thm: distance} to obtain lower and upper bounds on $d_1$.

\subsubsection{Lower bound on $d_1$.}

By the definition of $r_j$, we have
$$
d_1 \ge \sqrt{1 + d(DB_1, \Span(DB_j)_{2 \le j \le l'})^2}.
$$
We apply Theorem~\ref{thm: distance} in dimension $n-l$ instead of $n$, and with
$$
k=l'-1,\quad k_0=(1-\d) l', \quad k_1=(1+\d)l'.
$$
Recall here that in \eqref{eq: delta} we selected $\d = c/\log n$.
Note that by construction \eqref{eq: A without l columns},
the vectors $B_i$ do not contain the diagonal elements of $A$, and so their entries have
mean zero as required in Theorem~\ref{thm: distance}. Applying part (i) of that theorem, we obtain
with probability at least $1 - 2 \exp(- c \d l')$ that
\begin{equation}							\label{eq: d1 lower}
d_1 \ge \sqrt{ 1 + c \bar{S}_{k_1}^2}
\ge \frac{1}{2}(1+ c \bar{S}_{k_1}).
\end{equation}

\subsubsection{Upper bound on $d_1$.}

Now we apply part (ii) of Theorem~\ref{thm: distance}. This time we shall use
a sharper bound stated in Remark~\ref{rem: distance sharper}.
It yields that with probability at least $1-2 l' \exp(-c \d l')$, the following holds.
There exists $a = (a_2,\ldots, a_{l'})$ such that
\begin{gather}				\label{eq: dist sum prelim}
\Big\| DB_1 - \sum_{j=2}^{l'} a_j DB_j \Big\|_2
\le C M (\bar{S}_{k_0} + \sqrt{k} \, s_{k_0+1}), \\			
\|a\|_2 \le M,
\quad \text{where }
M = \frac{C k \sqrt{k_0}}{k-k_0} \le 2\sqrt{l'} / \d.    \label{eq: M dist}
\end{gather}

We can simplify \eqref{eq: dist sum prelim}. Using \eqref{eq: ratio conclusion} and monotonicity,
we have
$$
k s_{k_0+1}^2
\le 2 k s_{k_1}^2
= \frac{2k}{k_1-k_0} (k_1-k_0) s_{k_1}^2
\le \frac{2k}{k_1-k_0} \bar{S}_{k_0}^2
\le \frac{2}{\d} \bar{S}_{k_0}^2,
$$
thus again using \eqref{eq: ratio conclusion}, we have
$$
(\bar{S}_{k_0} + \sqrt{k} \, s_{k_0+1})^2
\le 2( \bar{S}_{k_0}^2 + k s_{k_0+1}^2)
\le \frac{6}{\d} \bar{S}_{k_0}^2
\le \frac{30}{\d} \bar{S}_{k_1}^2.
$$
Hence \eqref{eq: dist sum prelim} yields
$$
\Big\| DB_1 - \sum_{j=2}^{l'} a_j DB_j \Big\|_2
\le \frac{C M}{\sqrt{\d}} \bar{S}_{k_1}.
$$
Recall that this holds with probability at least $1-2 l' \exp(-c \d l')$.
On this event, by the construction of $r_i$ and using the bound on $a$ in \eqref{eq: M dist}, we have
\begin{align}
d_1
  &=d(r_1, \Span(r_j)_{2 \le j \le l'})
  \le \Big\| r_1 - \sum_{j=2}^{l'} a_j r_j \Big\|_2	\nonumber\\
  &= 1 + \|a\|_2 + \Big\| DB_1 - \sum_{j=2}^{l'} a_j DB_j \Big\|_2
  \le 2 M \Big( 1 + \frac{C}{\sqrt{\d}} \bar{S}_{k_1}	\Big).     			\label{eq: d1 upper}
\end{align}

\subsubsection{Completion of the proof of Theorem~\ref{thm: test projection}}

Combining the events \eqref{eq: d1 upper} and \eqref{eq: d1 lower}, we have shown the following.
With probability at least $1 - 4 l' \exp(- c \d l')$, the following two-sided estimate holds:
$$
\frac{1}{2}(1 + c \bar{S}_{k_1}) \le d_1 \le 2 M \Big( 1 + \frac{C}{\sqrt{\d}} \bar{S}_{k_1}	\Big).
$$
A similar statement can be proved for general $d_i$, $1 \le i \le l'$. By intersecting
these events, we obtain that with probability at least
$1 - 4 (l')^2 \exp(- c \d l')$, all such bounds for $d_i$ hold simultaneously.
Suppose this indeed occurs. Then by \eqref{eq: d1}, we have
\begin{equation}				\label{eq: ratio Pei Pej}
\frac{\|Pe_i\|_2}{\|Pe_j\|_2}
= \frac{d_j}{d_i}
\le \frac{4 M \big( 1 + (C/\sqrt{\d}) \bar{S}_{k_1} \big)}{1 + c \bar{S}_{k_1}}
\le \frac{C_1}{\sqrt{\d}} \, M
\quad \forall 1 \le i, j \le l'.
\end{equation}

We have calculated the conditional probability of \eqref{eq: ratio Pei Pej}; recall that
we conditioned on $\bar{A}$ which satisfies the event \eqref{eq: Abar conditioned}, which itself holds
with probability $1-2n\exp(-cl)$.
Thus the unconditional probability of the event \eqref{eq: ratio Pei Pej} is at least
$1-2n\exp(-cl) - C_1 (l')^2 \exp(- c \d l')$.
Recalling that $l/2 \le l \le n/4$ and $\d = c/\log n$, and simplifying this expression,
we arrive at the probability bound claimed in Theorem~\ref{thm: test projection}.
Since $M \le 2 \sqrt{l}/\d$ according to \eqref{eq: M dist},
the estimate \eqref{eq: ratio Pei Pej} yields the first part of (iii) in Theorem~\ref{thm: test projection}.
The second part, stating that $Pe_i = 0$ for $l' < i \le l$,
was already noted in (iii) or Proposition~\ref{prop: Pei via distances}.
Thus Theorem~\ref{thm: test projection} is proved.
\qed

\section{Proof of Theorem~\ref{thm: main approximate} and Corollary~\ref{cor: psi_alpha}}				\label{s: proof}

Let $G$ be a random matrix from Theorem~\ref{thm: main approximate}. We shall apply
Proposition~\ref{prop: deloc from P} for
\begin{equation}    \label{eq: bound on |z|}
A = G - z I_n, \quad |z| \le K_1 \sqrt{n},
\end{equation}
where $z \in \C$ is a fixed number for now, and $K_1$ is a parameter to be chosen later.
The power of Proposition~\ref{prop: deloc from P}
relies on the existence of a test projection $P$ for which the balancing event
$\BB_{\a, \kappa}$ is likely.
We are going to validate this condition using the test projection
constructed in Theorem~\ref{thm: test projection}.

\begin{proposition}[Balancing event is likely]			\label{prop: balancing likely}
  Let $\a = c / (l \log^{3/2} n)$ and $\kappa=c$.
  Then, for every fixed $(j_0, J_0) \in \Lambda$, one can find a test projection
  as required in Proposition~\ref{prop: deloc from P}. Moreover,
  $$
  \Pr[A]{ \BB_{\a,\kappa} } \ge 1 - 2 n^2 \exp(-cl/\log n).
  $$
  Here $c = c(K,K_1) > 0$.
\end{proposition}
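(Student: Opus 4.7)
The plan is to reduce to a canonical index pair by symmetry, invoke Theorem~\ref{thm: test projection} to construct $P$, and then verify the two inequalities defining $\BB_{\a,\kappa}$ using sub-gaussian concentration and the column-norm comparability supplied by Theorem~\ref{thm: test projection}(iii).

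First I would apply a simultaneous permutation of rows and columns of $G$. This preserves the independence structure of the entries and leaves $zI_n$ invariant, so I may assume without loss of generality that $j_0 = 1$ and $J_0 = \{2,\ldots,l\}$. Then Theorem~\ref{thm: test projection} produces an $l'\times n$ matrix $P$ that is a function of $\{A_j\}_{j>l}$ alone, hence independent of the columns $G_1,\ldots,G_l$ (equivalently of $A_{j_0}$ and $A_{J_0}$). The properties $PP^* = I_{l'}$ and $\ker P \supseteq \{A_j\}_{j>l}$ already supply conditions (i) and (ii) of Proposition~\ref{prop: deloc from P}. I would then condition on the event $\mathcal{P}$ of Theorem~\ref{thm: test projection}(iii), which fails with probability at most $2n^2\exp(-cl/\log n)$.

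For the lower bound $\|PA_{j_0}\|_2 \ge \kappa\sqrt{l}$, I would apply the small-ball estimate Theorem~\ref{thm: concentration}(ii) to the independent vector $X = G_1$ with shift $y = zPe_1$. Since $\|P\|_\HS^2 = \tr(PP^*) = l'$ and $\|P\| = 1$, this yields
$$\|PA_{j_0}\|_2 \;=\; \|PG_1 - zPe_1\|_2 \;\ge\; \tfrac{1}{6}\bigl(\sqrt{l'} + |z|\,\|Pe_1\|_2\bigr) \;\ge\; c\sqrt{l}$$
with probability at least $1-2\exp(-cl')$. For the other inequality I would split $PA_{J_0} = PG_{J_0} - zPE_{J_0}$, where $E_{J_0}$ is the $n\times(l-1)$ submatrix of $I_n$ on columns $J_0$. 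Theorem~\ref{thm: product norm} applied to the independent block $G_{J_0}$ gives $\|PG_{J_0}\| \le C\sqrt{l}$ with probability at least $1 - 2\exp(-cl)$. On the event $\mathcal{P}$, using $\|Pe_j\|_2 = 0$ for $l' < j \le l$ together with the comparability on $[l']$,
$$\|PE_{J_0}\|^2 \;\le\; \|PE_{J_0}\|_\HS^2 \;=\; \sum_{j\in J_0}\|Pe_j\|_2^2 \;\le\; l\,\bigl(C\sqrt{l}\log^{3/2}n\bigr)^2 \|Pe_1\|_2^2,$$
so $\|PE_{J_0}\| \le Cl\log^{3/2}n\cdot \|Pe_1\|_2$. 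Combining,
$$\|PA_{J_0}\| \;\le\; C\sqrt{l} + Cl\log^{3/2}n\cdot|z|\,\|Pe_1\|_2 \;\le\; C' l\log^{3/2}n\,\bigl(\sqrt{l'} + |z|\,\|Pe_1\|_2\bigr) \;\le\; 6C' l\log^{3/2}n\cdot\|PA_{j_0}\|_2,$$
which yields $\a = c/(l\log^{3/2}n)$. A union bound over the three exceptional events produces the stated probability, since the latter two are exponentially smaller than $2n^2\exp(-cl/\log n)$.

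The main obstacle is the upper bound on $\|PA_{J_0}\|$: in the decomposition $PA_j = PG_j - zPe_j$ the diagonal piece $zPe_j$ can dominate the concentration-friendly piece $PG_j$, and its absolute magnitude $|z|\|Pe_j\|_2$ cannot be estimated directly — doing so would be essentially as hard as the original delocalization problem. The resolution is that Theorem~\ref{thm: test projection}(iii) only compares different $\|Pe_j\|_2$ to one another, and, crucially, the small-ball lower bound on $\|PA_{j_0}\|_2$ already contains the \emph{same} quantity $|z|\,\|Pe_1\|_2$ that dominates the upper bound on $\|PA_{J_0}\|$. Consequently the unknown absolute magnitude cancels in the ratio, leaving only the comparability loss $l\log^{3/2}n$ that appears in $\a$.
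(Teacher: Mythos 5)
Your proposal is correct and follows essentially the same route as the paper's proof: reduce to $j_0=1$, $J_0=\{2,\ldots,l\}$ by symmetry, invoke Theorem~\ref{thm: test projection} and condition on its good event, lower-bound $\|PA_1\|_2$ via the small-ball estimate (Theorem~\ref{thm: concentration}(ii)) applied to $PG_1 - zPe_1$, upper-bound $\|PA_{J_0}\|$ by splitting off $PG_{J_0}$ (handled by Theorem~\ref{thm: product norm}) from the diagonal part and using the column-norm comparability, then take a union bound. Your closing observation — that the unknown magnitude $|z|\,\|Pe_1\|_2$ appears on both sides and hence cancels in the ratio, which is precisely why the proof avoids estimating $\|Pe_j\|_2$ in absolute terms — is exactly the key mechanism the paper relies on.
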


\begin{proof}
Without loss of generality, we assume that $j_0 = 1$ and $J_0 = \{2,\ldots,n\}$.
We apply Theorem~\ref{thm: test projection},
and choose $l' \in [l/2,l]$ and $P$ determined by $\{A_j\}_{j > l}$ guaranteed by that theorem.
The test projeciton $P$ automatically satisfies the conditions of Proposition~\ref{prop: deloc from P}.
Moreover, with probability at least $1 - 2 n^2 \exp(-cl/\log n)$, one has
\begin{equation}				\label{eq: Pej Pe1}
\|Pe_j\|_2 \le C \sqrt{l} \, \log^{3/2} n \cdot \|Pe_1\|_2 \quad \text{for } 2 \le j \le l.
\end{equation}
Let us condition on $\{A_j\}_{j > l}$ for which the event \eqref{eq: Pej Pe1} holds;
this fixes $l'$ and $P$ but leaves $\{A_j\}_{j \le l}$ random as before.

The definition \eqref{eq: balancing} of balancing event $\BB_{\a,\kappa}$ requires us to estimate the norms of
$$
PA_1 = P G_1 - z P e_1 \quad \text{and} \quad P A_{J_0} = P G_{J_0} - z P_{J_0}.
$$
For $PA_1$, we use the small ball probability estimate, Theorem~\ref{thm: concentration}(ii).
Recall that $\|P\|_\HS^2 = \tr(PP^*) = \tr(I_{l'}) = l' \ge l/2$ and $\|P\| = 1$.
It follows that with probability at least $1-2\exp(-cl)$, we have
\begin{equation}				\label{eq: PA1}
\|PA_1\|_2 \ge c(\sqrt{l} + |z| \|Pe_1\|_2).
\end{equation}

Next, we estimate
\begin{equation}							\label{eq: PAJ0 two terms}
\|P A_{J_0}\| \le \|P G_{J_0}\| + |z| \|P_{J_0}\|.
\end{equation}
For the $l' \times (l-1)$ matrix $PG_{J_0}$, Theorem~\ref{thm: product norm}
(see Remark~\ref{rem: product norm special cases}) implies that
with probability at least $1-2\exp(-l)$ one has
$
\|PG_{J_0}\| \le C \sqrt{l}.
$
Further, \eqref{eq: Pej Pe1} allows us to bound
$
\|P_{J_0}\|
\le \|P_{J_0}\|_\HS
\le \sqrt{l} \max_{2 \le j \le l} \|Pe_j\|_2
\le C l \, \log^{3/2} n \cdot \|Pe_1\|_2.
$
Thus \eqref{eq: PAJ0 two terms} yields
\begin{equation}				\label{eq: PAJ0}
\|P A_{J_0}\| \le C \sqrt{l} + C l \log^{3/2} n \cdot |z| \|Pe_1\|_2.
\end{equation}

Hence, estimates \eqref{eq: PA1} and \eqref{eq: PAJ0} hold simultaneously
with probability at least $1 - 4\exp(-cl)$. Recall that this concerns conditional probability,
where we conditioned on the event \eqref{eq: Pej Pe1}, which itself holds
with probability at least $1 - 2 n^2 \exp(-cl/\log n)$.
Therefore, estimates \eqref{eq: PA1} and \eqref{eq: PAJ0} hold simultaneously
with (unconditional) probability at least
$1 - 4\exp(-cl) - 2 n^2 \exp(-cl/\log n) \ge 1 - 6 n^2 \exp(-cl/\log n)$.
Together they yield
$$
\|PA_1\|_2 \ge \a \|PA_{J_0}\| \quad \text{where } \a = c / (l \log^{3/2} n).
$$
This is the first part of the event $\BB_{\a,\kappa}$.
Finally, \eqref{eq: PA1} implies that $\|PA_1\|_2 \ge c \sqrt{l}$,
which is the second part of the event $\BB_{\a,\kappa}$ for $\kappa=c$.
The proof is complete.
\end{proof}

Substituting the conclusion of Proposition~\ref{prop: balancing likely}
into Proposition~\ref{prop: deloc from P}, we obtain:

\begin{proposition}				\label{prop: localization unlikely}
  Let $0<w<l$ and $W = C l \log^{3/2} n$. Then
  $$
  \Pr{\LL_{W,w}} \le
   4 n^3 \exp(-cl/\log n).  \qquad 
  $$
  Here $C=C(K,K_1)$, $c=c(K,K_1)>0$.
\end{proposition}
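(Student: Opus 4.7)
The proposition is essentially obtained by pipelining Proposition~\ref{prop: deloc from P} and Proposition~\ref{prop: balancing likely}, so the plan is mostly a matter of parameter bookkeeping and verifying that the two statements compose cleanly.

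First I would fix $\a = c/(l \log^{3/2} n)$ and $\kappa = c$ as in Proposition~\ref{prop: balancing likely}. For each $(j_0, J_0) \in \Lambda$, that proposition constructs a test projection $P$ satisfying conditions (i) and (ii) of Proposition~\ref{prop: deloc from P}, together with
$$
\Pr[A]{\BB_{\a,\kappa}^c \,|\, (j_0, J_0)} \le 2 n^2 \exp(-cl/\log n).
$$
Since this estimate is uniform in $(j_0,J_0)$, the same bound is preserved after averaging over $(j_0, J_0) \in \Lambda$.

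Next I would match the value of $W$. Proposition~\ref{prop: deloc from P} is stated for $W = \frac{w}{\kappa l} + \frac{\sqrt{2}}{\a}$, which with our choice of $\a, \kappa$ becomes
$$
W = \frac{w}{c l} + \frac{\sqrt{2}\, l \log^{3/2} n}{c}.
$$
The hypothesis $w < l$ makes the first term at most $1/c$, while the second is of order $l \log^{3/2} n$, so $W \le C l \log^{3/2} n$ for a suitable $C = C(K, K_1)$. This is precisely the point where the assumption $w < l$ gets used.

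Finally, I would substitute into the conclusion of Proposition~\ref{prop: deloc from P}:
$$
\Pr{\LL_{W,w}} \le 2 n \cdot \E_{(j_0,J_0)} \Pr[A]{\BB_{\a,\kappa}^c \,|\, (j_0, J_0)} \le 2 n \cdot 2 n^2 \exp(-cl/\log n) = 4 n^3 \exp(-c l /\log n),
$$
which is the desired estimate. There is no real obstacle here; the genuine content is already packaged inside Proposition~\ref{prop: balancing likely}, and the only thing to double-check is that the parameters $(\a, \kappa, W)$ produced by that proposition slot into the formula $W = w/(\kappa l) + \sqrt{2}/\a$ of Proposition~\ref{prop: deloc from P} to yield a bound of the announced form $W = C l \log^{3/2} n$.
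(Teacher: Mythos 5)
Your argument is correct and is exactly the deduction the paper performs: the text literally states the proposition follows by "substituting the conclusion of Proposition~\ref{prop: balancing likely} into Proposition~\ref{prop: deloc from P}," and your parameter bookkeeping (including the role of $w<l$ in absorbing the $w/(\kappa l)$ term and the monotonicity of $\LL_{W,w}$ in $W$) fills in that substitution accurately.
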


From this we can readily deduce a slightly stronger version of Theorem~\ref{thm: main approximate}.

\begin{corollary}					\label{cor: delocalization all z}
  Consider a random matrix $G$ as in Theorem~\ref{thm: main approximate}.
  Let $0 \le s \le n$, $s+2 \le l \le n/4$ and $W = C l \log^{3/2} n$.
  Then the event
  $$
  \LL_W := \Big\{ \exists \text{ $\Big(\frac{s}{\sqrt{n}}\Big)$-approximate 
    eigenvector $v$ of $G$ with }
    \|v\|_2=1, \; \|v\|_\infty > W \sqrt{\frac{l}{n}} \Big\}
  $$
  is unlikely:
  $$
  \Pr{\LL_W} \le C n^5 \exp(-cl/\log n).
  $$
  Here $C = C(K)$, $c = c(K)>0$.
\end{corollary}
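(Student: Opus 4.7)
The plan is to promote the fixed-$z$ statement of Proposition~\ref{prop: localization unlikely} to a uniform statement over all approximate eigenvalues $z$ by means of an $\e$-net argument in $\C$. The key observation is that if $v$ is an $(s/\sqrt{n})$-approximate eigenvector with approximate eigenvalue $z$, and $z_0$ is any complex number with $|z - z_0| \le 1/\sqrt{n}$, then $\|(G - z_0 I_n)v\|_2 \le (s+1)/\sqrt{n}$ by the triangle inequality, so $v$ falls into the localization event $\LL_{W,s+1}$ for $A = G - z_0 I_n$. The admissibility condition $s + 1 < l$ required by Proposition~\ref{prop: localization unlikely} is guaranteed by the hypothesis $s + 2 \le l$.

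First I would control the modulus of any potential approximate eigenvalue. If $\|v\|_2 = 1$ and $\|Gv - zv\|_2 \le s/\sqrt{n}$, then $|z| \le \|G\| + s/\sqrt{n}$. By Theorem~\ref{thm: product norm} (Remark~\ref{rem: product norm special cases}), $\|G\| \le C\sqrt{n}$ on an event $\EE_0$ of probability at least $1 - 2\exp(-n)$. Since $s \le n/4$, this yields $|z| \le K_1\sqrt{n}$ on $\EE_0$ for some constant $K_1 = K_1(K)$, so that Proposition~\ref{prop: localization unlikely} is applicable to any such $z$.

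Next I would build a $(1/\sqrt{n})$-net $\NN$ of the disk $\{z \in \C : |z| \le K_1 \sqrt{n}\}$; a standard volume estimate gives $|\NN| \le C K_1^2 n^2$. For each fixed $z_0 \in \NN$, applying Proposition~\ref{prop: localization unlikely} to $A = G - z_0 I_n$ with parameter $w = s+1$ shows that $\LL_{W, s+1}$ for this choice of $A$ has probability at most $4 n^3 \exp(-cl/\log n)$. A union bound over $\NN$ produces an event $\EE_1$ of probability at least $1 - C n^5 \exp(-cl/\log n)$ on which no unit vector $v$ with $\|v\|_\infty > W\sqrt{l/n}$ satisfies $\|(G - z_0 I_n)v\|_2 \le (s+1)/\sqrt{n}$ for any $z_0 \in \NN$.

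Finally, I would combine these to prove the corollary. On $\EE_0 \cap \EE_1$, suppose for contradiction that some $v$ witnesses $\LL_W$: that is, $\|v\|_2 = 1$, $\|v\|_\infty > W\sqrt{l/n}$, and $\|Gv - zv\|_2 \le s/\sqrt{n}$ for some $z \in \C$. By $\EE_0$, one has $|z| \le K_1\sqrt{n}$, so there exists $z_0 \in \NN$ with $|z - z_0| \le 1/\sqrt{n}$, giving $\|(G - z_0 I_n)v\|_2 \le (s+1)/\sqrt{n}$, which contradicts $\EE_1$. Hence $\Pr{\LL_W} \le \Pr{\EE_0^c} + \Pr{\EE_1^c} \le 2\exp(-n) + C n^5 \exp(-cl/\log n)$, and the first term is absorbed into the second since $l \le n/4$. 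The argument is essentially routine once Proposition~\ref{prop: localization unlikely} is available; the only mild subtlety is calibrating the mesh to scale $1/\sqrt{n}$, so that the net-approximation error is exactly matched by the slack between the tolerance $s/\sqrt{n}$ and the admissibility threshold $l/\sqrt{n}$, while keeping $|\NN|$ polynomial in $n$ so that the union bound preserves the exponential decay.
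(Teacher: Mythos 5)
Your proposal is correct and follows essentially the same route as the paper: bound the modulus of possible approximate eigenvalues via the operator norm event, cover the resulting disc by a $(1/\sqrt n)$-net of size $O(n^2)$, absorb the net error into the slack $w = s+1$, and apply Proposition~\ref{prop: localization unlikely} with a union bound. The calibration of mesh, the use of $s+2 \le l$ to ensure $w < l$, and the final probability bookkeeping all match the paper's argument.
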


\begin{proof}
Recall that $G$ is nicely bounded with high probability. Indeed, Theorem~\ref{thm: product norm}
(see Remark~\ref{rem: product norm special cases}) states that the event
\begin{equation}				\label{eq: EEnorm}
\EE_\textrm{norm} := \left\{ \|G\| \le C_1 \sqrt{n} \right\}
\quad \text{is likely:} \quad
\Pr{\EE_\textrm{norm}} \le 1 - 2 \exp(-cn).
\end{equation}
Assume that $\EE_\textrm{norm}$ holds. Then all $(s/\sqrt{n})$-approximate eigenvalues 
of $G$ are contained in the complex disc centered at the origin and with radius 
$\|G\|+ s/\sqrt{n} \le 2C_1 \sqrt{n}$.
Let $\{z_1,\ldots,z_N\}$ be a $(1/\sqrt{n})$-net of this disc such that $N \le C_2 n^2$.

Assume $\LL_W$ holds, so there exists an $(s/\sqrt{n})$-approximate 
eigenvector $v$ of $G$ such that
$\|v\|_2=1$ and $\|v\|_\infty > W \sqrt{l/n}$. Choose a point $z_i$ in the net closest to $z$,
so $|z-z_i| \le 1/\sqrt{n}$.
Then
$$
\|(G-z_i I_n)v\|_2 
\le \|(G-zI_n)v\|_2 + |z-z_i|
\le \frac{s+1}{\sqrt{n}}.
$$
This argument shows that $\LL_W \cap \EE_\textrm{norm} \subseteq \bigcup_{i=1}^N \LL_W^{(i)}$,
where
$$
\LL_W^{(i)} =
\Big\{ \exists v \in S^{n-1}:
  \|v\|_\infty > W \sqrt{\frac{l}{n}}
  \text{ and }
  \|(G-z_i I_n)v\|_2 \le \frac{s+1}{\sqrt{n}} \Big\}.
$$
Recall that the probability of $\EE_\textrm{norm}$ is estimated in \eqref{eq: EEnorm},
and the probabilities of the events $\LL_W^{(i)}$ can be bounded using
Proposition~\ref{prop: localization unlikely} with $w=s + 1$. 
(Our assumption that $l \ge s+2$ enforces the bound $w < l$ that 
is needed in Proposition~\ref{prop: localization unlikely}.)
It follows that
$$
\Pr{\LL_W}
\le \Pr{\EE_\textrm{norm}^c} + \sum_{i=1}^N \Pr{\LL_W^{(i)}}
\le 2 \exp(-cn) + C_2 n^2 \cdot 4 n^3 \exp(-cl/\log n).
$$
Simplifying this bound we complete the proof.
\end{proof}

\medskip

\begin{proof}[Proof of Theorem~\ref{thm: main approximate}]
We are going to apply Corollary~\ref{cor: delocalization all z} for $l = C t (s+1) \log^2 n$.
This is possible as long as $s \le n$ and $t (s+1) < c n / \log^2 n$,
since the latter restriction enforces the bound $l \le n/4$.
In this regime, the conclusion of Theorem~\ref{thm: main approximate} 
follows directly from Corollary~\ref{cor: delocalization all z}. 

In the remaining case, where either $s \ge n$ or $t (\e\sqrt{n}+1) > c n / \log^2 n$,
the right hand side of \eqref{eq: main approximate} 
is greater than $\|v\|_2$ for an appropriate choice of the constant $C$.
Thus, in this case, the bound \eqref{eq: main approximate} holds trivially 
since one always has $\|v\|_\infty \le \|v\|_2$. 
Theorem~\ref{thm: main approximate} is proved.
\end{proof}

\subsection{Deduction of Corollary~\ref{cor: psi_alpha}}

Using a standard truncation argument, we will now
deduce Corollary~\ref{cor: psi_alpha} for general exponential tail decay.
We will first prove the following relaxation of Proposition~\ref{prop: localization unlikely}.

\begin{proposition}				\label{prop: localization unlikely-general}
  Let $G$ be an $n \times n$ real random matrix whose entries $G_{ij}$  are independent
  random variables satisfying $\E G_{ij} = 0$, $\E G_{ij}^2 \ge 1$ and $\|G_{ij}\|_{\psi_\a} \le M$.
  Let $z \in \mathbb{C}$, $0<w<l-1$, and  $t \ge 2$. Set $W = C l t^{\b} \log^{\g} n$, and consider the event $\LL_{W,w}$ defined as in \eqref{eq: LL} for the matrix $A=G-zI_n$.
  Then
  $$
  \Pr{\LL_{W,w}} \le
   4 n^3 \exp(-cl/\log n) +n^{-t}.
  $$
  Here $\b, \g, C, c>0$ depend only on $\a$ and $M$.
\end{proposition}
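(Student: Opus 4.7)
\emph{Plan.} I reduce to the sub-gaussian case handled by Proposition~\ref{prop: localization unlikely} via a standard truncation. The $\psi_\alpha$ tail decay lets us choose a threshold $T$ of order $(t\log n)^{1/\alpha}$ such that every entry $G_{ij}$ lies in $[-T,T]$ except on an event of probability $\le n^{-t}$. On the complementary event we replace $G$ by a centred truncation $\tilde G$, which is bounded (hence sub-gaussian) and differs from $G$ by a super-polynomially small deterministic matrix; applying Proposition~\ref{prop: localization unlikely} to $\tilde G$ will yield the claim.

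\smallskip

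\textbf{Step 1 (truncation).} Choose $T=C_\alpha(t\log n)^{1/\alpha}$ large enough that $\Pr{|G_{ij}|>T}\le n^{-(t+3)}$ for every $i,j$; this is possible because $\|G_{ij}\|_{\psi_\alpha}\le M$. Set
\[
\mu_{ij}=\E\!\left[G_{ij}\mathbf 1_{\{|G_{ij}|\le T\}}\right],\qquad
\tilde G_{ij}=G_{ij}\mathbf 1_{\{|G_{ij}|\le T\}}-\mu_{ij},
\]
and let $\mathcal E_T=\{|G_{ij}|\le T\text{ for all }i,j\}$, so $\Pr{\mathcal E_T^c}\le n^{-(t+1)}$. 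The variables $\tilde G_{ij}$ are independent, have zero mean, are bounded by $2T$ (hence $\|\tilde G_{ij}\|_{\psi_2}\le CT$), and satisfy $\E\tilde G_{ij}^2\ge 1/2$ because $\E[G_{ij}^2\mathbf 1_{\{|G_{ij}|>T\}}]$ is super-polynomially small in $n$. The centring constants $|\mu_{ij}|\le\E[|G_{ij}|\mathbf 1_{\{|G_{ij}|>T\}}]$ are likewise super-polynomially small, so the deterministic matrix $\mu=(\mu_{ij})$ satisfies $\|\mu\|\le\|\mu\|_\HS\le n^{-(t+2)}$ (for $n$ past a threshold depending only on $\alpha,M$; for smaller $n$ the stated bound is trivial).

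\smallskip

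\textbf{Step 2 (reduction).} On $\mathcal E_T$ we have $G=\tilde G+\mu$, so any unit $v$ with $\|(G-zI_n)v\|_2\le w/\sqrt n$ also satisfies $\|(\tilde G-zI_n)v\|_2\le(w+1)/\sqrt n$. Thus $\mathcal E_T\cap\LL_{W,w}\subseteq\tilde{\LL}_{W,w+1}$, where the tilde event is the localisation event \eqref{eq: LL} associated with $\tilde G-zI_n$. After at most a $\sqrt 2$ rescaling to normalise the variances exactly, $\tilde G$ satisfies Assumption~\ref{ass} with sub-gaussian parameter $K\asymp T\asymp(t\log n)^{1/\alpha}$, and the hypothesis $w<l-1$ passes to $w+1<l$, so Proposition~\ref{prop: localization unlikely} is applicable.

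\smallskip

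\textbf{Step 3 (bookkeeping of constants).} By the remark on polynomial dependence following Theorem~\ref{thm: main}, the constants $C(K)$ and $c(K)$ appearing in Proposition~\ref{prop: localization unlikely} are polynomial in $K$. Since $K\asymp(t\log n)^{1/\alpha}$, the contribution of $C(K)$ to $W$ becomes a factor of the form $t^{C_0/\alpha}\log^{C_0/\alpha}n$ which, combined with the existing $l\log^{3/2}n$, yields $W=Cl\,t^\beta\log^\gamma n$ with $\beta,\gamma$ depending only on $\alpha,M$. The polynomial dependence of $c(K)$ can similarly be absorbed (at worst by enlarging $\gamma$) so that the probability bound retains the form $4n^3\exp(-cl/\log n)$ with $c=c(\alpha,M)>0$. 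Adding $\Pr{\mathcal E_T^c}\le n^{-t}$ completes the bound. The main obstacle is precisely this bookkeeping in Step~3: the polynomial-in-$K$ constants must be redistributed between the threshold $W$ (producing $\beta$ and $\gamma$) and the exponential rate in the probability bound (kept in the clean form $cl/\log n$). Verifying the moments of $\tilde G_{ij}$, bounding $\|\mu\|$, and handling small $n$ where the claim is trivial are routine.
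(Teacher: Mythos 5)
Your proof follows essentially the same route as the paper: truncate the entries at a level $T\asymp(t\log n)^{1/\alpha}$, pass to the centered bounded matrix $\tilde G$, note that the localization event for $G-zI_n$ lies inside the localization event (with $w\mapsto w+1$) for the truncated-and-centered matrix up to the small event $\mathcal E_T^c$, and invoke Proposition~\ref{prop: localization unlikely} with the now $t$- and $n$-dependent sub-gaussian parameter $K$. The only cosmetic difference is that you subtract the means $\mu_{ij}$ into $\tilde G$ from the outset, whereas the paper keeps $\E\tilde G$ as a separate small perturbation and identifies $G=\tilde G$ on the likely event; the bookkeeping in your Step~3 matches the paper's appeal to polynomial dependence on $K$.
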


\begin{proof}[Proof (sketch).]
Set $K:=(C t \log n)^{1/\a}$, and let $\tilde{G}$ be the matrix with entries
$\tilde{G}_{ij}=G_{ij} \mathbf{1}_{|G_{ij}|\le K}$.
Since $\E G_{ij}=0$, the bound on $\|G_{ij}\|_{\psi_\a}$ yields
$|\E \tilde{G}_{ij}| \le \exp (-cK^\a)$. Hence
$$
\| \E \tilde{G} \|  \le \| \E \tilde{G} \|_{\HS} \le n \exp (-cK^\a) \le n^{-1/2}.
$$
Then the event $\LL_{W,w}$ for the matrix $A = G-zI_n$ implies
the event $\LL_{W,w+1}$ for the matrix $\tilde{A} := G -\E \tilde{G} - z I_n$.
It remains to bound the probability of the latter event.
If the constant $C$ in the definition of $K$ is sufficiently large,
then with probability at least $1-n^{-t}$ we have $\tilde{G} = G$
and thus $\tilde{A} = \tilde{G} -\E \tilde{G} - z I_n$.
Conditioned on this likely event, the entries $\tilde{G} -\E \tilde{G}$ are independent,
bounded by $K$, have zero means and variances at least $1/2$.
Therefore, we can apply Proposition~\ref{prop: localization unlikely} for the matrix $\tilde{A}$
and thus bound the probability of $\LL_{W,w+1}$ for $\tilde{A}$, as required.
\end{proof}

\medskip

Corollary \ref{cor: psi_alpha} follows from Proposition \ref{prop: localization unlikely-general}
in the same way as Corollary \ref{cor: delocalization all z} followed from Proposition \ref{prop: localization unlikely}.
The only minor difference is that one would put a coarser bound the norm of $G$.
For example, one can use that $\|G\| \le \|G\|_\HS \le n \cdot \max_{i,j \le n}|G_{ij}| \le n \cdot Ms$ with probability
at least $1-2 n^2 \exp(-c s^\a)$, for any $s>0$.
  This, however, would only affect the bound on the covering number $N$ in Corollary \ref{cor: delocalization all z}, changing the estimate in this Corollary to
  \[
    \Pr{\LL_W} \le C (Ms)^2 n^6 \exp(-cl/\log n).
  \]
We omit the details.
\qed

\appendix

\section{Invertibility of random matrices}				\label{app}

Our delocalization method relied on estimates of the smallest singular values of rectangular random matrices.
The method works well provided one has access to estimates that are polynomial in the dimension of the matrix
(which sometimes was of order $n$, and other times of order $l \sim \log^2 n$),
and provided the probability of having these estimates is, say, at least $1 - n^{-10}$.

In the recent years, significantly sharper bounds were proved than those required in our delocalization method,
see survey \cite{RV ICM}.
We chose to include weaker bounds in this appendix for two reasons. First, they hold
in somewhat more generality than those recorded in the literature, and also their proofs
are significantly simpler.

\begin{theorem}[Rectangular matrices]			\label{thm: rect}
  Let $N \ge n$, and let $A=D+G$ where
  $D$ is an arbitrary $N \times n$ fixed matrix and $G$ is an $N \times n$ random matrix with
  independent entries satisfying Assumption~\ref{ass}.
  Then
  \begin{equation}				\label{eq: snA}
  \Pr{ s_n(A) < c \sqrt{\frac{N-n}{n}} } \le 2 n \exp(-c(N-n)).
  \end{equation}
  Here $c=c(K)>0$.
\end{theorem}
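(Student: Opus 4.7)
The plan is to combine the ``negative second moment'' identity for the Moore--Penrose pseudoinverse with the small ball probability bound of Theorem~\ref{thm: concentration}(ii). By the perturbation argument noted in Section~\ref{s: preliminaries}, we may assume $A$ has full column rank, so $A^\dagger$ is well defined.

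The first step is a linear algebraic identity. For each $k \le n$, let $H_k = \Span\{A_j\}_{j \ne k}$ (with $A_j$ the $j$-th column of $A$), and let $P_k$ denote the orthogonal projection onto $H_k^\perp$. The $k$-th row of $A^\dagger$ is the unique vector $r_k \in H_k^\perp$ with $\<r_k, A_k\> = 1$, and hence $\|r_k\|_2 = 1/d(A_k, H_k)$. Summing over $k$ gives
\[
s_n(A)^{-2} \le \sum_{k=1}^n s_k(A)^{-2} = \|A^\dagger\|_\HS^2 = \sum_{k=1}^n \frac{1}{d(A_k, H_k)^2} \le \frac{n}{\min_{k \le n} d(A_k, H_k)^2},
\]
so it is enough to show that, with probability at least $1 - 2n\exp(-c(N-n))$, every distance $d(A_k, H_k)$ is at least a constant multiple of $\sqrt{N-n}$.

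To bound a single distance, I would condition on the columns $\{G_j\}_{j\ne k}$; this determines both $H_k$ and $P_k$ while leaving $G_k$ independent with the original distribution. Since $\dim H_k \le n-1$, the projection $P_k$ satisfies $\|P_k\| = 1$ and $\|P_k\|_\HS^2 \ge N-n+1$. Writing $A_k = D_k + G_k$, we have $d(A_k, H_k) = \|P_k G_k - (-P_k D_k)\|_2$, and applying Theorem~\ref{thm: concentration}(ii) conditionally with matrix $P_k$ and target vector $y = -P_k D_k$ gives
\[
\mathbb{P}\bigl\{ d(A_k, H_k) < \tfrac{1}{6}\sqrt{N-n} \bigr\} \le 2 \exp(-c(N-n)).
\]
A union bound over $k = 1, \ldots, n$, combined with the pseudoinverse identity, then yields \eqref{eq: snA}.

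There is no serious obstacle here: the identity is classical, and the required concentration input is already in hand. The two points to verify carefully are (a) the almost sure invertibility of $A$, which is handled by the perturbation remark in Section~\ref{s: preliminaries}, and (b) the independence of $P_k$ from $G_k$, which is what justifies the conditional application of Theorem~\ref{thm: concentration}(ii). The factor of $\sqrt{n}$ in the lower bound on $s_n(A)$ and the factor of $n$ in the failure probability both come from the sum over $k$ in the pseudoinverse identity, so this approach is not tight when $N-n$ is comparable to $n$; however, it is well suited to the small-excess regime $N-n \sim \log n$ relevant to our delocalization method.
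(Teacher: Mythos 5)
Your proof is correct and follows essentially the same route as the paper's: both apply the negative second moment identity $s_n(A)^{-2} \le \sum_k d(A_k, H_k)^{-2}$ and then bound each distance from below via Theorem~\ref{thm: concentration}(ii), using that the projection onto $H_k^\perp$ is independent of $G_k$ and has Hilbert--Schmidt norm at least $\sqrt{N-n}$, followed by a union bound. You have merely made explicit two points the paper states tersely (the characterization of the rows of $A^\dagger$ and the conditioning that justifies applying the small ball bound to the shifted vector $A_k = D_k + G_k$), which is fine.
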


\begin{proof}
Using the negative second moment identity (see \cite{TV ESD} Lemma A.4]), we have
\begin{equation}				\label{eq: sm0}
s_n(A)^{-2}
\le \sum_{i=1}^n s_i(A)^{-2}
= \sum_{i=1}^n d(A_i, E_i)^{-2}
\end{equation}
where $A_i=D_i+G_i$ denote the columns of $A$ and $E_i = \Span(A_j)_{j \le n, \, j \ne i}$.
For fixed $i$, note that
$d(A_i, E_i) = \|P_{E_i^\perp} A_i\|_2$.
Since $A_i$ is independent of $E_i$, we can apply the small ball probability
bound, Theorem~\ref{thm: concentration}(ii). Using that $\|P_{E_i^\perp}\|_\HS^2 = \dim(E_i^\perp) \ge N-n$
and $\|P_{E_i^\perp}\| = 1$, we obtain
$$
\Pr{ d(A, E_i) < c \sqrt{N-n} } \le 2 \exp(-c(N-n)).
$$

Union bound yields that with probability at least $1 - 2 n \exp(-c(N-n))$, we have
$d(A_i, E_i) \ge c \sqrt{N-n}$ for all $i \le n$.
Plugging this into \eqref{eq: sm0}, we conclude that with the same probability,
$s_n(A)^{-2} \le c^{-2} n / (N-n)$.
This completes the proof.
\end{proof}

\begin{corollary}[Intermediate singular values]			\label{thm: intermediate}
  Let $A=D+G$ where $D$ is an arbitrary $N \times M$ fixed matrix
  and $G$ is an $N \times M$ random matrix with
  independent entries satisfying Assumption~\ref{ass}.
  Then all singular values $s_n(A)$ for $1 \le n \le \min(N,M)$
  satisfy the estimate \eqref{eq: snA} with
  $c=c(K)>0$.
\end{corollary}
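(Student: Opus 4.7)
The plan is to reduce the intermediate singular value problem to the square/tall case already handled by Theorem~\ref{thm: rect} via the Cauchy interlacing inequality for singular values.

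Fix $n$ with $1 \le n \le \min(N,M)$. First, I would form an $N \times n$ submatrix $A'$ of $A$ by retaining any $n$ columns of $A$ (say, the first $n$). Writing $A = D + G$, we have a corresponding decomposition $A' = D' + G'$, where $D'$ is a fixed $N \times n$ matrix and $G'$ is the $N \times n$ random matrix obtained from $G$ by keeping the same $n$ columns. Since the entries of $G$ are independent and satisfy Assumption~\ref{ass}, the same is true of the entries of $G'$, so $A'$ satisfies the hypotheses of Theorem~\ref{thm: rect}.

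Next, I would invoke the Cauchy interlacing theorem for singular values, which says that deleting columns from a matrix can only decrease its singular values: $s_i(A') \le s_i(A)$ for $1 \le i \le n$. In particular $s_n(A) \ge s_n(A')$. Applying Theorem~\ref{thm: rect} to $A'$ (with the same parameter $N$ and inner dimension $n$) yields
\[
\Pr{ s_n(A') < c \sqrt{\tfrac{N-n}{n}} } \le 2 n \exp\bigl(-c(N-n)\bigr),
\]
and the inclusion of events $\{ s_n(A) < c \sqrt{(N-n)/n} \} \subseteq \{ s_n(A') < c \sqrt{(N-n)/n} \}$ immediately transfers this bound to $s_n(A)$, which is exactly the claim.

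There is no real obstacle here: the entire content of the corollary is the interlacing step, which is deterministic, plus Theorem~\ref{thm: rect} applied to the submatrix. The boundary case $n=N$ is automatic since the right-hand side $c\sqrt{(N-n)/n}$ vanishes and $s_n(A) \ge 0$ always. One minor point worth flagging is that the constant $c$ on the right-hand side of the probability bound and inside $\sqrt{(N-n)/n}$ must be taken to match the constant produced by Theorem~\ref{thm: rect}; since both come from the same application of Theorem~\ref{thm: concentration}(ii), this is automatic.
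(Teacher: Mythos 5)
Your proof is correct and is essentially identical to the paper's: the paper also restricts to the first $n$ columns $A_0$, uses the fact that $s_n(A) \ge s_n(A_0)$ (which is the interlacing/min-max observation you spell out), and applies Theorem~\ref{thm: rect} to $A_0$. You merely make explicit the interlacing step and the event inclusion, which the paper leaves implicit.
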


\begin{proof}
Recall that $s_n(A) \ge s_n(A_0)$ where $A_0$ is formed by the first $n$ columns of $A$.
The conclusion follows from Theorem~\ref{thm: rect} applied to $A_0$.
\end{proof}

\begin{theorem}[Products of random and deterministic matrices]				\label{thm: rectangular}
  Let $k,m,n \in \N$, $m \le \min(k,n)$.
  Let $P$ be a fixed $m \times n$ matrix such that $P P^\tran = I_m$,
  and $G$ be an $n \times k$ random matrix with independent entries that satisfy
  Assumption~\ref{ass}. Then
  $$
  \Pr{ s_m(PG) < c \, \frac{k-m}{k} } \le 2k \exp(-c(k-m)).
  $$
  Here $c=c(K)$.
\end{theorem}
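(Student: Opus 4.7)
The plan is to mirror the proof of Theorem~\ref{thm: rect} by combining the negative second moment identity with the small-ball probability estimate. Since $k \ge m$, the transpose $A := (PG)^\tran = G^\tran P^\tran$ is $k \times m$ (tall) and $s_m(PG) = s_m(A)$. Write the orthonormal rows of $P$ as $p_1^\tran, \ldots, p_m^\tran$; then the $j$-th column of $A$ is $c_j = G^\tran p_j$, and the identity
\[
s_m(A)^{-2} \le \sum_{j=1}^m s_j(A)^{-2} = \sum_{j=1}^m d(c_j, F_j)^{-2}, \qquad F_j := \Span(c_i)_{i \ne j},
\]
reduces the task to a pointwise lower bound on each distance $d(c_j, F_j)$. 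Indeed, if $d(c_j, F_j) \ge c\sqrt{k}$ holds simultaneously for all $j$, then $s_m(PG) \ge c\sqrt{k/m} \ge c(k-m)/k$ (the last inequality being $\sqrt{k/m} \ge 1 \ge (k-m)/k$).

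For fixed $j$, the variational formula $d(c_j, F_j) = \min_{a \in \R^{m-1}} \|G^\tran(p_j - P_{\hat j}^\tran a)\|_2$, combined with the orthogonality $p_j \perp \im(P_{\hat j}^\tran)$ (where $P_{\hat j}$ denotes $P$ with the $j$-th row removed), shows that the minimum ranges over an affine family of vectors $q = p_j - P_{\hat j}^\tran a \in \R^n$ of norm $\|q\|_2 \ge 1$. After normalization, the problem becomes a uniform lower bound on $\|G^\tran v\|_2$ for $v$ in an $m$-dimensional piece of $S^{n-1}$ contained in $\im(P^\tran)$.

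For each fixed unit vector $v$, the random vector $G^\tran v \in \R^k$ has independent entries $\<G_i, v\>$ (because the columns $G_i$ of $G$ are independent), each of mean zero, variance at least $\|v\|_2^2 = 1$, and sub-gaussian norm of order $K$. Theorem~\ref{thm: concentration}(ii) with ambient matrix $I_k$ then gives $\Pr{\|G^\tran v\|_2 < c\sqrt{k}} \le 2\exp(-ck)$. A standard $\delta$-net argument on the relevant $m$-dimensional spherical slice (of cardinality at most $C_1^m$ when $\delta = \delta(K)$ is a small constant), together with the operator-norm bound $\|PG\| \le C(\sqrt{m}+\sqrt{k})$ from Theorem~\ref{thm: product norm} (see Remark~\ref{rem: product norm special cases}) to control the approximation error $\|G^\tran(v-v')\|_2 \le \|PG\| \cdot \|v-v'\|_2$, then extends this pointwise estimate to $d(c_j, F_j) \ge c\sqrt{k}$ with probability at least $1 - C_1^m \exp(-ck) - 2\exp(-k)$.

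The main obstacle is calibrating the exponents so that, after union-bounding over the $m$ distance estimates and over the net, the total exceptional probability matches the claimed $2k \exp(-c(k-m))$. Concretely, the net contribution $C_1^m$ must be absorbed into the small-ball exponent $ck$ to yield an exponent proportional to $k-m$: the inequality $C_1^m \exp(-ck) \le k \exp(-c'(k-m))$ rearranges to $m(\log C_1 - c') \le \log k + (c - c')k$, which one verifies for any sufficiently small $c' \in (0, c)$ in the meaningful regime $k - m \gtrsim \log k$; outside this regime the stated probability $2k \exp(-c(k-m))$ exceeds $1$ and the conclusion is vacuous. The final bound then follows by standard book-keeping.
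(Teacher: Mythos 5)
Your reduction via the negative second moment identity to distance estimates $d(c_j,F_j)$ is tempting but hits a structural obstacle: the columns $c_j = G^\tran p_j$ of $(PG)^\tran$ are \emph{not} independent of one another (they are different linear images of the same $G$), so unlike the proof of Theorem~\ref{thm: rect}, Theorem~\ref{thm: concentration}(ii) cannot be applied directly with $c_j$ independent of $F_j$. You correctly sidestep this by passing to the variational form $d(c_j,F_j) = \min_a \|G^\tran q_a\|_2$ and a net on the associated spherical slice, but the bound you then target, $d(c_j,F_j) \ge c\sqrt{k}$, is simply false when $m$ is close to $k$. Take $G$ Gaussian: since the $p_j$ are orthonormal, the $c_j = G^\tran p_j$ are i.i.d.\ $N(0,I_k)$, $F_j$ is independent of $c_j$, and $d(c_j,F_j)$ concentrates around $\sqrt{k-(m-1)}\approx\sqrt{k-m}$, which is much smaller than $\sqrt{k}$ when $k-m \ll k$. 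A proof strategy that would establish $d(c_j,F_j)\ge c\sqrt k$ (equivalently, roughly, $s_m(PG)\gtrsim\sqrt{k}$) must fail in this regime, and indeed your calibration does not close: the requirement $m(\log C_1 - c') \le \log k + (c-c')k$ with $c'<c$ small forces $m \lesssim \frac{c}{\log C_1}\,k$, i.e.\ an aspect ratio bounded away from $1$ by a $K$-dependent constant. This does not cover the range $\frac{c}{\log C_1}k \lesssim m \le k - C\log k$, where the theorem's conclusion is nonvacuous but your net argument's failure probability $C_1^m\exp(-ck)$ exceeds the target $k\exp(-c'(k-m))$.

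The paper avoids this by a two-scale split rather than by the negative second moment identity. It sets $m_0 = (1-\rho)m$ and treats the first $m_0$ columns $PG_i$ (which, crucially, \emph{are} independent random vectors) with Lemma~\ref{lem: fat} — a variant of Theorem~\ref{thm: rect} — to control test vectors $x$ not nearly orthogonal to $E = \Span(PG_i)_{i\le m_0}$; the remaining thin slab of directions near $E^\perp$ (of dimension $m-m_0 = \rho m$) is handled by the genuinely tall $(m-m_0)\times(k-m_0)$ matrix $R\bar G$ via Lemma~\ref{lem: tall}, which is where a net/concentration argument is legitimately applicable since $k-m_0 \ge C(m-m_0)$ by choosing $\rho$ proportional to $\delta = (k-m)/k$. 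That decomposition is what lets the exponent scale with $k-m$ rather than $k$; your argument would need an analogous mechanism to cover the almost-square case.
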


Let us explain the idea of the proof of Theorem~\ref{thm: rectangular}.
We need a lower bound for
$$
\|(PG)^* x\|_2^2 = \sum_{i=1}^k \< PG_i, x\> ^2,
$$
where $G_i$ denote the columns of $G$. The bound has to be uniform over $x \in S^{m-1}$.
Let $m=(1-\d)k$ and set $m_0 = (1-\rho) m$ for a suitably chosen $\rho \ll \d$.

First, we claim that if
$x \in \Span(PG_i)_{i \le m_0} =: E$ then $\sum_{i=1}^{m_0} \< PG_i, x\> ^2 \gtrsim \|x\|_2^2$.
This is equivalent to controlling the smallest singular value
of the $m \times m_0$ random matrix with independent columns $PG_i$, $i=1,\ldots,m_0$.
Since $m \ge m_0$, this can be achieved with a minor variant of Theorem~\ref{thm: rect}.
The same argument works for general $x \in \C^m$
provided $x$ is not almost orthogonal onto $E$.

The vectors $x$ that lie near the subspace $E^\perp$, which has dimension
$m-m_0 = \rho m$, can be controlled by the remaining $k-m_0$ vectors $PG_i$,
since $k-m_0 \gg m-m_0$.
Indeed, this is equivalent to controlling the smallest singular value of a $(m-m_0) \times (k-m_0)$
random matrix whose columns are $Q G_i$, where $Q$ projects onto $E^\perp$.
This is a version of Theorem~\ref{thm: rectangular} for very fat matrices, and it
can be proved in a standard way by using $\e$-nets.

Now we proceed to the formal argument.

\begin{lemma}[Slightly fat matrices]					\label{lem: fat}
  Let $m_0 \le m$. Consider the $m \times m_0$ matrix $T_0$ formed by the first
  $m_0$ columns of matrix $T = PG$. Then
  $$
  \Pr{ s_{m_0}(T_0) < c \sqrt{\frac{m-m_0}{m_0}} } \le 2 m_0 \exp(-c(m-m_0)).
  $$
\end{lemma}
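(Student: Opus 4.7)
My plan is to mimic the proof of Theorem~\ref{thm: rect} (the tall rectangular case), replacing the isotropic columns $A_i$ with the anisotropic columns $PG_i$ of $T_0$, and using the hypothesis $PP^\tran = I_m$ at the key step to compute the relevant Hilbert--Schmidt norm exactly.

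Concretely, I would start from the negative second moment identity of Tao--Vu: if $T_{0,i} = PG_i$ denotes the $i$th column of $T_0$ and $E_i = \Span(PG_j)_{j \le m_0,\, j \ne i}$, then
\[
s_{m_0}(T_0)^{-2} \le \sum_{i=1}^{m_0} d(PG_i, E_i)^{-2}.
\]
So it suffices to show that, with the claimed probability, $d(PG_i, E_i) \ge c\sqrt{m-m_0}$ for every $i$, at which point the bound $s_{m_0}(T_0)^{-2} \le m_0/(c^2(m-m_0))$ follows immediately.

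For a fixed $i$, write $d(PG_i, E_i) = \|P_{E_i^\perp} P G_i\|_2$. Since $G_i$ is independent of $E_i$, I would apply the small ball probability estimate (Theorem~\ref{thm: concentration}(ii)) to the $m \times n$ matrix $B_i := P_{E_i^\perp} P$ and the random vector $G_i$. The point where the hypothesis $PP^\tran = I_m$ enters is the Hilbert--Schmidt computation:
\[
\|B_i\|_\HS^2 = \tr(P^\tran P_{E_i^\perp} P) = \tr(P_{E_i^\perp} P P^\tran) = \tr(P_{E_i^\perp}) = \dim(E_i^\perp) \ge m - m_0 + 1,
\]
together with the trivial bound $\|B_i\| \le \|P_{E_i^\perp}\|\,\|P\| \le 1$. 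Theorem~\ref{thm: concentration}(ii) then gives
\[
\Pr{d(PG_i, E_i) < c\sqrt{m-m_0}} \le 2 \exp(-c(m-m_0)).
\]

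Finally, a union bound over $i = 1,\ldots,m_0$ yields that simultaneously $d(PG_i, E_i) \ge c\sqrt{m-m_0}$ for all $i$ with probability at least $1 - 2 m_0 \exp(-c(m-m_0))$; plugging this into the negative second moment inequality gives the claim. The only thing that could be called an obstacle is the Hilbert--Schmidt calculation above, and it is disposed of cleanly by the isometry assumption $PP^\tran = I_m$, which is exactly why that hypothesis is included. Everything else is a direct translation of the proof of Theorem~\ref{thm: rect} from isotropic columns to the anisotropic columns $PG_i$.
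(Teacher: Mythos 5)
Your proposal is correct and is exactly the ``minor variant of Theorem~\ref{thm: rect}'' that the paper alludes to but omits: negative second moment identity, small ball probability for each column, union bound. The one genuinely new ingredient you identify -- that $PP^\tran = I_m$ makes $\|P_{E_i^\perp}P\|_\HS^2 = \tr(P_{E_i^\perp}) = \dim(E_i^\perp) \ge m-m_0$ and $\|P_{E_i^\perp}P\| \le 1$ -- is exactly what the isometry hypothesis is for, so the adaptation is complete.
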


This is a minor variant of Theorem~\ref{thm: rect}; its proof is very similar and is omitted. \qed


\begin{lemma}[Very tall matrices]				\label{lem: tall}
  There exist $C = C(K)$, $c = c(K) > 0$ such that the following holds.
  Consider the same situation as in Theorem~\ref{thm: rectangular}, except that
  we assume that $k \ge C m$.
  Then
  $$
  \Pr{ s_m(PG) < c \sqrt{k} } \le \exp(-c k).
  $$
\end{lemma}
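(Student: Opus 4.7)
The plan is to reduce $s_m(PG)$ to a uniform lower bound over an $m$-dimensional sphere, establish the pointwise bound by sub-gaussian concentration, and then use an $\varepsilon$-net argument, which will be efficient because $k$ is much larger than $m$.

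First I would reformulate the problem. Since $PP^* = I_m$,
\[
s_m(PG) = \min_{x \in S^{m-1}_{\C}} \|(PG)^* x\|_2 = \min_{y \in S_E} \|G^* y\|_2,
\]
where $E = \im(P^*)$ is an $m$-dimensional subspace of $\C^n$ and $S_E$ is its unit sphere. So the task becomes: show that $\|G^* y\|_2 \ge c\sqrt{k}$ simultaneously for all $y \in S_E$.

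For a fixed $y \in S_E$, the vector $G^* y = (\<G_1,y\>, \ldots, \<G_k,y\>)^\tran$ depends linearly on the independent entries of $G$. Writing $G^* y = B \,\mathrm{vec}(G)$ with $B = I_k \otimes y^*$, one checks $\|B\|_\HS^2 = k\|y\|_2^2 = k$ and $\|B\| = \|y\|_2 = 1$. Theorem~\ref{thm: concentration}(ii) (small ball probability, applied with $y=0$) then gives
\[
\Pr{\|G^* y\|_2 < \tfrac{1}{6}\sqrt{k}} \le 2\exp(-ck).
\]
Now take an $\varepsilon$-net $\NN$ of $S_E$ (with $\varepsilon$ a small numerical constant to be fixed below) of cardinality $|\NN| \le (3/\varepsilon)^m$. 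Union-bounding, with probability at least $1 - 2(3/\varepsilon)^m \exp(-ck)$ the inequality $\|G^* y_0\|_2 \ge \sqrt{k}/6$ holds for every $y_0 \in \NN$.

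For the approximation step, for any $y \in S_E$ choose $y_0 \in \NN$ with $\|y - y_0\|_2 \le \varepsilon$. Writing $y = P^* x$, $y_0 = P^* x_0$ with $\|x-x_0\|_2 = \|y-y_0\|_2 \le \varepsilon$, we get
\[
\|G^* y\|_2 \ge \|G^* y_0\|_2 - \|(PG)^*(x-x_0)\|_2 \ge \tfrac{1}{6}\sqrt{k} - \varepsilon\|PG\|.
\]
Remark~\ref{rem: product norm special cases} gives $\|PG\| \le C_0(\sqrt{m}+\sqrt{k}) \le 2C_0 \sqrt{k}$ with probability at least $1-2\exp(-k)$. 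Choosing $\varepsilon = 1/(24 C_0)$ yields $\|G^* y\|_2 \ge \sqrt{k}/12$ uniformly in $y \in S_E$, i.e., $s_m(PG) \ge c\sqrt{k}$. The total failure probability is at most
\[
2\exp(-k) + 2(3/\varepsilon)^m \exp(-ck) \le \exp(-c' k)
\]
provided $k \ge C m$ with $C = C(K)$ large enough to absorb the $(3/\varepsilon)^m$ factor.

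The only point requiring any care is verifying that Theorem~\ref{thm: concentration}(ii) applies to $B\,\mathrm{vec}(G)$ with the entries of $\mathrm{vec}(G)$ satisfying Assumption~\ref{ass} (they do, since the entries of $G$ are independent and each satisfies it); there is no genuine obstacle. The argument is a standard net-plus-concentration reduction, made possible by the gap between $m$ and $k$.
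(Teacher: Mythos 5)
Your proof is correct and is exactly the ``standard concentration and covering argument'' the paper defers to (citing Vershynin's Theorem~5.39 for matrices with independent sub-gaussian columns): reduce $s_m(PG)$ to a uniform lower bound on $\|G^*y\|_2$ over the unit sphere of the $m$-dimensional subspace $\im(P^*)$, get a pointwise bound from Theorem~\ref{thm: concentration}(ii) with failure probability $\exp(-ck)$, and absorb the $\exp(O(m))$ net cardinality using $k \ge Cm$. The only cosmetic remark is that if one keeps the complex setting throughout, the net should be taken in the real $2m$-dimensional sphere, so the cardinality is $(3/\e)^{2m}$ rather than $(3/\e)^m$; this only changes the constant $C(K)$.
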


Lemma~\ref{lem: tall} is a minor variation of \cite[Theorem~5.39]{V RMT}
for $k \ge Cm$ independent sub-gaussian columns, and it can be proved in a similar way
(using a standard concentration and covering argument). \qed

\medskip

\begin{proof}[Proof of Theorem~\ref{thm: rectangular}]
Denote $T := PG$; our goal is to bound below the quantity
$$
s_m(T) = s_m(T^*) = \inf_{x \in S^{m-1}} \|T^* x\|_2^2.
$$
Let $\e, \rho \in (0,1/2)$ be parameters,
and set $m_0 = (1-\rho) m$.
We decompose
$$
T = [ T_0 \; \bar{T} ]
$$
where $T_0$ is the $m \times m_0$ matrix that consists of the first $m_0$ columns of $T$,
and $\bar{T}$ is the $(k-m_0) \times m$ matrix that consists of the last $k-m_0$ columns of $T$.
Let $x \in S^{m-1}$. Then
$$
\|T^*x\|_2^2 = \|T_0^* x\|_2^2 + \|\bar{T}^* x\|_2^2.
$$
Denote
$$
E = \im(T_0) = \Span(PG_i)_{i \le m_0}.
$$
Assume that $s_{m_0}(T_0) > 0$ (which will be seen to be a likely event),
so $\dim(E) = m_0$.

The argument now splits according to the position of $x$ relative to $E$.
Assume first that $\|P_E x\|_2 \ge \e$.
Since $\rank(T_0)=m_0$, using Lemma~\ref{lem: smallest sv}(i) we have
$$
\|T^* x\|_2 \ge \|T_0^* x\|_2 \ge s_{m_0}(T_0^*) \|P_E x\|_2 \ge s_{m_0}(T_0) \e.
$$
We will later apply Lemma~\ref{lem: fat} to bound $s_{m_0}(T_0)$ below.

Consider now the opposite case, where $\|P_E x\|_2 < \e$.  
There exists $y \in E^\perp$ such that $\|x-y\|_2 \le \e$, and in particular
$\|y\|_2 \ge \|x\|_2-\e \ge 1-\e > 1/2$. Thus
\begin{equation}				\label{eq: Tx below}
\|T^* x\|_2 \ge \|\bar{T}^* x\|_2 \ge \|\bar{T}^* y\|_2 - \|\bar{T}^*\| \e.
\end{equation}

We represent $\bar{T} = P \bar{G}$, where $\bar{G}$ is the $n \times (k-m_0)$ matrix
that contains the last $k-m_0$ columns of $G$.
Consider an $m \times (m-m_0)$ matrix $Q^*$ which is an isometric embedding
of $\ell_2^{m-m_0}$ into $\ell_2^m$,
and such that $\im(Q^*) = E^\perp$. Then there exists
$$
z \in \C^{m-m_0} \text{ such that } y = Q^* z, \quad \|z\|_2 = \|y\|_2 \ge 1/2.
$$
Therefore
$$
\|\bar{T}^* y\|_2 = \|\bar{G}^* P^* Q^* z\|_2.
$$
Since both $Q^* : \C^{m-m_0} \to \C^m$ and $P^* : \C^m \to \C^n$
are isometric embeddings, $R^* := P^* Q^* : \C^{m-m_0} \to \C^n$
is an isometric embedding, too. Thus $R$ is a $(m-m_0) \times n$
matrix which satisfies $RR^*=I_{m-m_0}$. Hence
$$
\|\bar{T}^* y\|_2 = \|\bar{B}^* z\|_2, \text{ where } \bar{B} := R\bar{G}
$$
is an $(m-m_0) \times (k-m_0)$ matrix.
Since $\|z\|_2 \ge 1/2$, we have
$\|\bar{T}^* y\|_2 \ge \frac{1}{2} s_{m-m_0}(B)$,
which together with \eqref{eq: Tx below} yields
$$
\|T^* x\|_2 \ge \frac{1}{2} s_{m-m_0}(\bar{B}) - \|\bar{T}\| \e.
$$
A bit later, we will use Lemma~\ref{lem: tall} to bound $s_{m-m_0}(\bar{B})$ below.

Putting the two cases together, we have shown that
\begin{equation}				\label{eq: smT}
s_m(T) \ge \min_{x \in S^{m-1}} \|T^* x\|_2
\ge \min \Big\{ s_{m_0}(T_0) \e, \, \frac{1}{2} s_{m-m_0}(\bar{B}) - \|\bar{T}\| \e \Big\}.
\end{equation}
It remains to estimate $s_{m_0}(T_0)$, $s_{m-m_0}(\bar{B})$ and $\|\bar{T}\|$.

Since $m_0 = (1-\rho)m$ and $\rho \in (0,1/2)$,
Lemma~\ref{lem: fat} yields that with probability at least $1- 2 m \exp(-c \rho m)$, we have
$$
s_{m_0}(T_0) \ge c \sqrt{\rho}.
$$

Next, we use Lemma~\ref{lem: tall} for the $(m-m_0) \times (k-m_0)$ matrix $\bar{B} = R\bar{G}$.
Let $\d \in (0,1)$ be such that $m=(1-\d)k$. Since $m_0=(1-\rho)m$, by choosing
$\rho = c_0 \d$
with a suitable $c_0>0$ we can achieve that $k-m_0 \ge C(m-m_0)$ to satisfy the
dimension requirement in Lemma~\ref{lem: tall}. Then, with probability
at least $1-2\exp(-c \d k)$ we have
$$
s_{m-m_0}(\bar{B}) \ge c \sqrt{\d k}.
$$

Further, by Theorem~\ref{thm: product norm}, with probability at least $1-2\exp(-k)$ we have
$$
\|\bar{T}\| \le \|T\| \le C \sqrt{k}.
$$

Putting all these estimates in \eqref{eq: smT}, we find that with
probability at least $1 - 2m \exp(-c \rho m) - 2\exp(-c \d k) - 2\exp(-k)$, one has
$$
s_m(T) \ge \min \Big\{c \sqrt{\rho} \, \e, \, \frac{1}{2} c \sqrt{\d k} - C \sqrt{k} \, \e \Big\}.
$$
Now we choose $\e = c_1 \sqrt{\d}$ with a suitable $c_1 > 0$,
and recall that we have chosen $\rho = c_0 \d$. We conclude that
$s_m(T) \ge c \min \{ \d, \, \sqrt{\d k} \} = c \d$.
Since $m = (1-\d) k$, the proof of Theorem~\ref{thm: rectangular} is complete.
\end{proof}

\end{document}